\newcommand{\dbar}{\bar{\partial}}
\newcommand{\C}{\mathbb{C}}
\newcommand{\R}{\mathbb{R}}
\newcommand{\Z}{\mathbb{Z}}
\theoremstyle{plain}
\newtheorem{theorem}{Theorem}[section]
\newtheorem{proposition}[theorem]{Proposition}
\newtheorem{lemma}[theorem]{Lemma}
\newtheorem{Corollary}[theorem]{Corollary}
\theoremstyle{definition}
\newtheorem{definition}[theorem]{Definition}
\newtheorem{remark}[theorem]{Remark}
\newcounter{constantLABEL}
\newcommand{\cref}[1]{C_{\ref{#1}}}
\newcounter{constantslabel}
\begin{document}


\title{
Seiberg--Witten type equations on compact symplectic 6-manifolds 
} 
\author{  
Yuuji Tanaka }
\date{}


\maketitle


\begin{abstract}
In this article, we consider a gauge-theoretic equation on compact
symplectic 6-manifolds, which 
 forms an elliptic system after gauge fixing. 
This can be thought of as a higher-dimensional
analogue of the Seiberg--Witten equation.   
By using the virtual neighbourhood method by Ruan \cite{R}, 
we define an integer-valued invariant, 
a 6-dimensional Seiberg--Witten invariant, from the moduli space of solutions to the
equations, 
assuming that the moduli space is compact; 
and it has no reducible solutions. 
We prove that the moduli spaces are compact if 
the underlying manifold is a compact K\"{a}hler
threefold. We then compute the integers in some cases. 
\end{abstract}



\footnote[0]{\textit{AMS 2010 Mathematics Subjet Classification}: 53C07. 
\textit{Key words}: gauge theory; the Seiberg--Witten equations.}

\section{Introduction}

Let $X$ be a compact symplectic 6-manifold with symplectic form
$\omega$. 
We take an almost complex structure $J$ compatible with 
the symplectic form $\omega$. 
We fix a $Spin^c$-structure $s$ on $X$. 
We denote the characteristic line bundle for $s$ by $\xi$. 
Then there exists a line bundle $L$ such that $\xi = L^2 \otimes
K_{X}^{-1}$, 
where $K_{X}^{-1}$ is the anti-canonical bundle of $X$. 
Let $A'$ be a connection on $\xi =L^{2} \otimes  K_{X}^{-1}$. 
We write $A' = A_{c} + 2A$, where $A_c$ is the canonical connection on
$K_{X}^{-1}$, which is fixed, and $A$ is a connection of a line bundle $L$. 
We then consider the following equations on compact symplectic 6-manifolds  
(see Section \ref{sec:mi} for more detail), 
seeking for  
a connection $A$ of $L$, $u \in \Omega^{0,3} (X)$, 
$\alpha \in C^{\infty}(L)$ 
and $\beta \in \Omega^{0,2} ( L)$.  
\begin{gather*}
  \dbar_{A} \alpha + \dbar_{A}^{*} \beta = 0 , \quad 
 \dbar_{A} \beta =  - \frac{1}{2} \alpha u ,  \\ 
 F_{A'}^{0,2} + \dbar^{*} u = \frac{1}{4} \bar{\alpha} \beta , \quad 
 \Lambda F_{A'}^{1,1} = - \frac{i}{8} \left( | u |^2 + |\beta|^2 -
 |\alpha|^2 \right),   
\end{gather*}
where $\Lambda = (\omega \wedge )^{*}$.

\begin{remark}
Richard P. W. Thomas once considered similar equations in \cite{T}. 
Our equations partially emerged out of discussion with Dominic Joyce around the
 end of 2010 together with the computation in the proof of Proposition \ref{eqk}. 
\end{remark}

These equations from an elliptic system with gauge fixing condition. 
We expect they enjoy nice properties similar to the
original Seiberg--Witten equations such as the compactness of the moduli
space.

In this article, we prove that the moduli spaces are compact if the
underlying manifold is a compact K\"{a}hler threefold. 
Hence, in this case, one can define an integer $n_{X} (s)$ for a
$Spin^c$-structure $s$, a 6-dimensional Seiberg--Witten invariant, 
by using Ruan's virtual neighbourhood method \cite{R}, if there are
no reducible solutions. 
We compute the numbers in some cases as follows.  
These are analogues of those for the Seiberg--Witten invariants in 4 dimensions. 
Firstly, we have the following.

\begin{theorem}[Corollary \ref{cor:kxn}]
Let $X$ be a compact K\"{a}hler threefold with $K_{X} \leq 0$, and let $s$ be a
 $Spin^c$-structure on $X$ with $\deg \xi <0$, where $\xi$ is the
 characteristic line bundle of the $Spin^c$-structure. 
Then $n_{X} (s) = 0$. 
\end{theorem}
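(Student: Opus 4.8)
The plan is to show that, under the stated hypotheses, the moduli space of solutions is empty; since $n_{X}(c)$ is constructed through Ruan's virtual neighbourhood method as a count (an integral of an Euler class over a compact moduli space), emptiness of the moduli space forces $n_{X}(c)=0$. Throughout I would work on the K\"{a}hler threefold $X$, where the compactness established earlier applies and the K\"{a}hler identities are available. The starting point is the integrated form of the fourth equation. Writing $c_{1}(\xi)=\tfrac{i}{2\pi}F_{A'}$ and using the K\"{a}hler identity $F_{A'}\wedge\tfrac{\omega^{2}}{2}=(\Lambda F_{A'})\tfrac{\omega^{3}}{6}$ together with $\Lambda F_{A'}=\Lambda F_{A'}^{1,1}$, the fourth equation gives
\begin{align*}
\deg\xi=\int_{X}c_{1}(\xi)\wedge\frac{\omega^{2}}{2}=\frac{1}{96\pi}\int_{X}\bigl(|u|^{2}+|\beta|^{2}-|\alpha|^{2}\bigr)\,\omega^{3}.
\end{align*}
Hence $\deg\xi<0$ already forces $\int_{X}|\alpha|^{2}\,\omega^{3}>\int_{X}(|u|^{2}+|\beta|^{2})\,\omega^{3}\ge 0$, so in particular $\alpha\not\equiv 0$ for every solution.

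The main step is a decoupling (Weitzenb\"{o}ck/Bochner) argument showing that every solution in fact has $u=0$ and $\beta=0$, that the connection $A$ induces a holomorphic structure ($F_{A}^{0,2}=0$), and that $\alpha$ is then $\dbar_{A}$-holomorphic. Concretely, I would take the $L^{2}$-norms of the first two equations, expand, and integrate the cross terms by parts, converting $\langle\dbar_{A}\alpha,\dbar_{A}^{*}\beta\rangle$ and $\langle\dbar_{A}\beta,\tfrac12\alpha u\rangle$ into curvature and zeroth-order couplings via $\dbar_{A}^{2}=F_{A}^{0,2}$, then substitute for $F_{A'}^{0,2}$ and $\Lambda F_{A'}^{1,1}$ using the third and fourth equations. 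The aim is to assemble an identity of the schematic form $\int_{X}\bigl(|\nabla_{A}\,\cdot\,|^{2}+(\text{quartic in }u,\beta)\bigr)=0$ in which every term is non-negative once $\alpha$ dominates, as it must by the integrated identity above, forcing $u=0$ and $\beta=0$. I expect this to be the hard part: one must keep careful track of the coupling term $\alpha u$ in the second equation and the $\dbar^{*}u$ term in the third, and verify that the curvature contributions combine with the K\"{a}hler/Fano positivity so that no indefinite terms survive.

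Granting the decoupling, the conclusion is immediate. With $u=\beta=0$ the first equation reduces to $\dbar_{A}\alpha=0$, so $\alpha$ is a holomorphic section of $L$. From $\xi=L^{2}\otimes K_{X}^{-1}$ we have $2\deg L=\deg\xi+\deg K_{X}$, and since $K_{X}<0$ gives $\deg K_{X}<0$ while $\deg\xi<0$ by hypothesis, we obtain $\deg L<0$. A holomorphic line bundle of negative degree on a connected compact K\"{a}hler manifold admits no nonzero holomorphic section, since a nonzero section would cut out an effective divisor of non-negative degree; hence $\alpha\equiv 0$. This contradicts $\alpha\not\equiv 0$ from the integrated identity. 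Equivalently, with all of $u,\beta,\alpha$ vanishing the fourth equation forces $\Lambda F_{A'}^{1,1}=0$ and hence $\deg\xi=0$, again contradicting $\deg\xi<0$. Therefore no solution exists, the moduli space is empty, and $n_{X}(c)=0$.
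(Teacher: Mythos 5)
Your skeleton is the same as the paper's: show the moduli space is empty, so that $n_{X}(c)=0$; use the integrated form of the $\Lambda F_{A'}^{1,1}$ equation to force $\alpha \not\equiv 0$; decouple to get $u = \beta = 0$, $F_{A}^{0,2}=0$ and $\dbar_{A}\alpha = 0$; then derive a contradiction from the fact that $L$, having $2\deg L = \deg \xi + \deg K_{X} < 0$, admits no nonzero holomorphic section. The first, second and last steps are correct and are exactly the paper's Proposition \ref{prop:emp}, which yields Corollary \ref{cor:kxn}. But the decoupling step is the entire analytic content of the theorem --- in the paper it is Proposition \ref{eqk} together with Proposition \ref{prop:dc}(i) --- and you do not prove it: you describe a plan (``take $L^{2}$-norms, integrate by parts, substitute the curvature equations'') and explicitly defer it as ``the hard part.'' That is a genuine gap, not a detail.

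Moreover, the mechanism you propose for closing it would not work as stated. You hope for an integral identity whose terms become ``non-negative once $\alpha$ dominates,'' invoking the integrated degree identity; but that identity gives only the $L^{2}$ bound $\int|\alpha|^{2} > \int(|u|^{2}+|\beta|^{2})$, and global $L^{2}$ domination cannot make pointwise-indefinite integrands non-negative. In fact no conditional positivity is needed: in the paper's computation \eqref{eq:kr1}--\eqref{eq:kr3} all curvature contributions are eliminated using the equations themselves (neither the $\Lambda F_{A'}^{1,1}$ equation nor any positivity of $K_{X}^{-1}$ enters the decoupling --- the hypothesis $K_{X}<0$ is used only at the very end), and one arrives at the unconditional identity
\begin{equation*}
 \| \dbar_{A} \beta \|^{2}_{L^2}
  + 2 \| \dbar^{*} u \|^{2}_{L^2}
  + \tfrac{1}{4} \| \, | \alpha | \, | u | \, \|^{2}_{L^2}
  + \tfrac{1}{2} \| \dbar_{A}^{*} \beta \|^{2}_{L^2}
  + \tfrac{1}{8} \| \, | \alpha | \, | \beta | \, \|^{2}_{L^2}
  = 0 ,
\end{equation*}
in which every term is manifestly a square. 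Note also what this identity actually yields: not $u \equiv 0$ and $\beta \equiv 0$, but the pointwise relations $\alpha u = 0$ and $\bar{\alpha}\beta = 0$, together with $\dbar_{A}\alpha = \dbar_{A}\beta = \dbar_{A}^{*}\beta = \dbar^{*}u = 0$ and $F_{A'}^{0,2}=0$. Passing from these to $u\equiv 0$ and $\beta \equiv 0$ requires the unique continuation step of Proposition \ref{prop:dc}: since $\alpha\not\equiv 0$ and $\dbar_{A}\alpha=0$, the zero set of $\alpha$ has empty interior, so $\alpha u = 0$ and $\bar{\alpha}\beta=0$ force $u$ and $\beta$ to vanish on a dense open set and hence everywhere. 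Your sketch jumps over this step entirely. Once the decoupling is actually established, your concluding contradiction ($\alpha$ a nonzero holomorphic section of a line bundle of negative degree) is correct and coincides with the paper's argument.
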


The assumption $c_2(X)=0$ is used to obtain the (virtual) dimension of the moduli space to be zero.

For the case where $K_{X} >0$, we get the following.

\begin{theorem}[Theorem \ref{thm:kxp}]
Let $X$ be a compact K\"{a}hler threefold with  $K_{X} >0$.  
Let $s_c$ be the $Spin^c$-structure coming from the complex structure. 
We also assume that $c_2 (X) =0$.  
Then $n_{X}(s_{c}) =1$. 
\end{theorem}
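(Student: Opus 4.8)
The plan is to reduce the invariant $n_X(s_c)$ to an explicit count of solutions for the canonical $Spin^c$-structure and to show that this count is a single, nondegenerate, positively oriented point. For $s_c$ the characteristic line bundle is $\xi = K_X^{-1}$, so the auxiliary bundle $L$ satisfies $L^2 = \mathcal{O}$; taking $L = \mathcal{O}$, the connection $A$ becomes a connection on the trivial bundle, $A' = A_c + 2A$, and the unknowns $\alpha,\beta,u$ live in $C^{\infty}(X)$, $\Omega^{0,2}(X)$, $\Omega^{0,3}(X)$, with $F_{A_c}^{0,2}=0$ so that $F_{A'}^{0,2}=2F_A^{0,2}$. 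First I would integrate the last equation over $X$. Since $\int_X \Lambda F_{A'}^{1,1}\,dV$ is a fixed multiple of $\deg_\omega \xi = -\deg_\omega K_X$, which is negative because $K_X > 0$, the right-hand side is forced to be sign-definite and one obtains
\[
\int_X |\alpha|^2\,dV \;>\; \int_X \bigl(|u|^2 + |\beta|^2\bigr)\,dV ,
\]
so in particular $\alpha \not\equiv 0$ for every solution. This already shows there are no reducible solutions.

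Next I would prove a vanishing theorem identifying the moduli space with a single point. Using $\dbar_A^2 = F_A^{0,2}$, the first equation yields $\|\dbar_A\alpha\|^2 + \|\dbar_A^*\beta\|^2 + 2\,\mathrm{Re}\langle F_A^{0,2}\alpha,\beta\rangle = 0$; substituting $F_{A'}^{0,2}=2F_A^{0,2}$ from the third equation and using the second equation to eliminate $u$ and $\dbar_A\beta$, the cross term combines with the Weitzenb\"ock terms so that the K\"ahler identities together with the sign coming from $K_X > 0$ make the resulting integral sign-definite. I expect this to force $\beta \equiv 0$ and $u \equiv 0$, leaving $\dbar_A\alpha = 0$ with $\alpha \not\equiv 0$. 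Since $L = \mathcal{O}$ has degree $0$ yet carries a nonzero $\dbar_A$-holomorphic section, the holomorphic structure $(\mathcal{O},\dbar_A)$ must be the trivial one, so $\alpha$ is a nonzero constant (its modulus fixed by the integrated equation) and $A$ is, up to gauge, the trivial connection. Hence the moduli space is a single irreducible point.

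Then I would compute the virtual dimension and show it is $0$. The linearisation of the four equations together with the gauge-fixing operator forms an elliptic complex whose Euler characteristic I would evaluate by Riemann--Roch. Because $L = \mathcal{O}$, every holomorphic constituent is governed by $\chi(X,\mathcal{O}) = \int_X \mathrm{td}(X) = c_1(X)c_2(X)/24$, which vanishes under the hypothesis $c_2(X) = 0$; collecting the pieces shows the virtual dimension is $0$. This is precisely where the assumption $c_2(X) = 0$ enters.

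Finally I would pin down the local contribution. At the K\"ahler solution the linearised operator is complex-linear, so the deformation and obstruction spaces $H^1,H^2$ are complex and the determinant line carries its canonical complex orientation. I would check $H^1 = 0$: a deformation must keep $\alpha$ holomorphic for the deformed $\dbar_A$, and since $\alpha$ is a nonzero section of a degree-$0$ bundle this forces the $(0,1)$-variation to be $\dbar$-exact, hence gauge, while the linearised second and third equations kill the variations of $\beta$ and $u$. Combined with virtual dimension $0$ this gives $H^2 = 0$, so the point is nondegenerate and its contribution to Ruan's virtual count is $+1$; therefore $n_X(s_c) = 1$. The main obstacle is the vanishing theorem of the second step: the new $(0,3)$-field $u$ has no analogue in the four-dimensional Seiberg--Witten setting, so making the integral identity sign-definite while tracking $u$ through the second and third equations is the delicate point, as is the verification of $H^1 = 0$, and hence of nondegeneracy, that produces the sign $+1$.
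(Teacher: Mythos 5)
Your overall route---reduce to the decoupled K\"ahler equations, show $\beta\equiv u\equiv 0$, identify the moduli space with the single point given by the canonical connection and a nonzero constant section $\alpha_0$, get virtual dimension zero from $c_2(X)=0$ and $c_1(L)=0$, and then prove the point is nondegenerate with sign $+1$---is the same as the paper's. But your final step rests on a claim that is false: the linearised complex at the solution is \emph{not} complex-linear, so there is no ``canonical complex orientation'' to appeal to. Concretely, the linearisation of the fourth equation contributes the term $-\tfrac{1}{8}\,i\,\mathrm{Re}(a\bar{\alpha})\,\omega$ and the linearisation of the third contributes $\tfrac14(\alpha\bar{b}-\bar{\alpha}b)$, both only $\R$-linear in $(a,b)$; moreover the connection variations live in the real vector space $i\Omega^{1}(X,\R)$ and the gauge directions are $L_1(ig)=(2i\,dg,0,-i\alpha g,0)$ with $g$ real-valued. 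The sign of the point is therefore a genuine comparison between the orientation arising at the solution and the reference orientation fixed on $H^0(X,i\R)$, $H^1(X,i\R)$, $H^{0,3}(X)$ and $H^{+}(X,i\R)$. The paper settles this by splitting $L_i=\overline{L}_i+M_i$ with $M_i$ of order zero, identifying cohomology with the harmonic spaces of $\overline{L}_i$, and reducing the sign to that of the explicit finite-dimensional complex $0\to i\R\to\C\to i\R\to 0$ (minus the inclusion followed by $i/8$ times the real part), which is $+1$. Without some computation of this kind your argument only yields $n_X(s_c)=\pm 1$.

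There is a second gap in your $H^1=0$ step. From the linearised equation $\dbar\dot{\alpha}+a^{0,1}\alpha=0$ with $\alpha$ a nonvanishing constant you may conclude that $a^{0,1}=\dbar f$ for a \emph{complex}-valued function $f$; unitary gauge transformations account only for the imaginary part of $f$. Killing the real part is exactly where the linearised moment-map equation must be used: in the paper one writes $a=(p+iq)\alpha$, gauges away $q$, and derives $4\Delta p+p=0$, forcing $p=0$. ``$\dbar$-exact, hence gauge'' elides precisely this point, and likewise the vanishing of the variations of $\beta$ and $u$ requires the identity $\dbar\dbar^{*}b+\dbar^{*}\dbar b+\tfrac14|\alpha|^2 b=0$ together with unique continuation, not just the linearised equations read off directly. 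Finally, in your vanishing theorem for $(\beta,u)$: the integral identity you are hoping for is exactly the one proved in Proposition \ref{eqk}, but note it is sign-definite \emph{unconditionally} on any compact K\"ahler threefold; the hypothesis $K_X>0$ enters only through $\deg\xi<0$ in the degree identity \eqref{eq:degeq}, and passing from the resulting pointwise relations $\bar{\alpha}\beta=0$, $\alpha u=0$ to $\beta\equiv 0$, $u\equiv 0$ still needs unique continuation as in Proposition \ref{prop:dc}. These last items are fixable along the paper's lines, but the orientation argument as you state it would fail.
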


The organisation of this article is as follows. 
In Section \ref{sec:mi}, we briefly describe $Spin^{c}$-structures and the Dirac
operators 
on compact symplectic manifolds,  
and recall the Seiberg--Witten equation on compact symplectic
4-manifolds. 
Then we introduce our equation in six dimensions and describe its linearisation. 
In Section \ref{sec:inv}, we introduce an integer-valued invariant, 
which can be thought of as a 6-dimensional Seiberg--Witten invariant, 
from the moduli space of
solutions to the equation by using Ruan's virtual neighbourhood method. 
We then consider the K\"{a}hler case in Section \ref{sec:kahler}. 
We see that the equations reduce to the vortex type equations in this
case. We then describe the moduli space for the negative degree case. 
We also prove that the moduli spaces are compact when the underlying
manifold is a compact K\"{a}hler threefold,  and
compute the integers in some cases.

\paragraph{Acknowledgements.} 
I would like to thank Dominic Joyce for enlightening discussion on the
subject around the end of 2010, and  for useful comments and 
helpful suggestions on the manuscripts.  
I am grateful to Mikio Furuta for helpful discussion and valuable comments.  
I would like to thank referees for useful comments 
and for pointing out gaps in the earlier versions of the manuscript. 
I am grateful to Oscar  Garc\'{i}a-Prada for sending me some 
useful references. I would also like to thank Hiroshi Iritani and Thomas
Walpuski for useful comments.   
I am also grateful to Seoul National University, NCTS at National Taiwan University, Kyoto University, BICMR at Peking University and Institut des Hautes \'{E}tudes Scientifiques for support and hospitality, where part of this work was done during my visits in 2015--17.

\section{Seiberg--Witten type equations on compact symplectic 6-manifolds}
\label{sec:mi}

\subsection{$\mathbf{Spin^c}$-structure and the Dirac operator on
  compact symplectic manifolds}

A general reference for $Spin^{c}$-structures and its Dirac operator is
Lawson--Michelsohn \cite{LM}.

\paragraph{$\mathbf{Spin^c}$-structure on compact symplectic manifolds.}

Let $X$ be a compact symplectic manifold of dimension $2n$ with symplectic form
$\omega$.  
We fix an almost complex structure $J$ compatible with $\omega$. 
Note we have a metric of the form $g( \cdot, \cdot) := \omega (\cdot , J \cdot )$. 
Any almost complex manifold has the canonical 
$Spin^c$ structure associated with the almost complex structure,   
whose spinor bundle$S = S^+ \oplus S^-$ is given by  $S^{+} = \oplus \Lambda^{0,even} (X)$ and $S^{+} = \oplus \Lambda^{0,odd}
(X)$; and the characteristic line bundle $\xi := \det (S^+)$ is given by $\Lambda^{0,2} = K_{X}^{-1}$.  
Also for any $Spin^c$ structure on X, the spinor bundle $S$ on $X$ can be written as  
$$ S^{+} =  \oplus \Lambda^{0, even}(X) \otimes L, \, S^{-} =
 \oplus \Lambda^{0, odd}(X) \otimes L ,$$
by some line bundle $L$ on $X$.

\paragraph{The Dirac operator on symplectic manifolds.}

The Dirac operator $D_{A'}$ associated to a connection $A'$ on the
characteristic line bundle $\xi$ is
given by the  following composition. 
\begin{equation*}
 \Gamma(S) \xrightarrow{\nabla_{A'}} \Gamma(T^{*} X \otimes S) 
\xrightarrow{g} \Gamma(TX \otimes S) \xrightarrow{\rho} \Gamma(S) , 
\end{equation*}
where $g$ is the metric defined as $g( \cdot ,\cdot ) := \omega (\cdot , J \cdot )$, and $\rho$ is the Clifford multiplication. 
In almost complex case, it is 
written as 
$ D_{A'} = \sqrt{2} ( \bar{\partial}_{A} + \bar{\partial}_{A}^{*})$, 
where $A$ is a connection on $L$.

\subsection{The Seiberg--Witten equations on symplectic 4-manifolds}

We recall the Seiberg--Witten equations in the original form first, 
which was introduced by Witten \cite{W} (see also \cite{M}).    
Let $M$ be a  compact, oriented, smooth 4-manifold. 
We fix a Riemannian metric and a $Spin^c$ structure on $M$.  
We denote by $S^+$ the half spinor bundle over $M$ associated to the
$Spin^c$-structure,  
and by $\xi$ the characteristic line bundle $\det (S^+)$ of the $Spin^c$-structure.

The Seiberg--Witten equations on $M$ are equations seeking for a
connection $A'$ of $\xi$ and a section of $S^{+}$ satisfying the
following. 
\begin{gather*}
D_{A'} \psi  = 0 ,  \quad 
F_{A'}^{+}  = \frac{1}{4} \tau (\psi \otimes \psi^{*} )  , 
\end{gather*}
where $D_{A'}$ is the Dirac operator associated to the connection $A'$, 
and $F_{A'}^{+}$ is the self-dual part of the curvature $F_{A'}$ of the
connection $A'$.   
Also $\tau$ is a  map $\tau : \text{End}\, (S^{+}) \to \Lambda^{+}
\otimes \C$ defined as follows: 
By the Clifford multiplication $\rho: T^*X \to \text{End}(S^+ \oplus S^-)$, 
we define a map $\Lambda^2 \to \text{End} (S^+) $ by 
$\rho ( v \wedge w ) := \frac{1}{2} ( \rho (v) \rho (w) - \rho (w) \rho (v))$. 
Note that  $\rho (v)$ for $v \in T^* X$ is a map from $S^{+}$ to $S^{-}$ or the other way around. 
We then extend this complex linearly to $\Lambda^{+} \otimes \C \to \text{End} (S^+)$. 
The map $\tau$ is defined by the adjoint of this.

We next consider these equations on a compact
symplectic 4-manifold with symplectic structure $\omega$ 
(see \cite{Ta1}, \cite{Ta}, also \cite{BG}, \cite{G2}, \cite{HT} and \cite{K}). 
We fix an almost complex structure compatible with $\omega$.  
Then we have the following decomposition of the self-dual part of the
curvature. 
$F_{A'}^{+} = F_{A'}^{2,0} + F_{A'}^{0}  + F_{A'}^{0,2}$, 
where $F_{A'}^{0}$ is the $\omega$-component of the curvature
$F_{A'}$. 
In addition, we can consider the canonical
$Spin^c$  structure 
whose characteristic line bundle is $K_{M}^{-1} = \Lambda^{0,2} ( T^{*} M \otimes
\C) $. 
Using this canonical $Spin^c$ structure, we can write the half-spinor
bundle $S^+$ for any $Spin^c$ structure as 
$ S^{+} = L \oplus ( L \otimes K_{M}^{-1}) $, 
where $L$ is some complex line bundle on $M$.  
Let $u_0 \equiv 1$ be a constant section of $X \times \C$. 
We then have the canonical $Spin^c$ connection $A_c$ of $K_M^{-1}$ which satisfies $D_{A_c} u_0 = 0$,  
and each  connection $A'$ of the characteristic line bundle is written by 
$A' = A_c + 2 A $, 
where $A$ is a connection of $L$.  
We write a spinor as  $\psi = \varphi_0 u_0 + \varphi_2$, 
where 
$\varphi_0 \in \Gamma (L), \,
\varphi_2 \in \Gamma ( L \otimes K^{-1} )$. 
Then the Seiberg--Witten equations becomes as follows. 
\begin{gather*}
 \dbar_{A} \varphi_0 +\dbar_{A}^{*} \varphi_2 = 0,  \\ 
 F_{A'}^{0,2} =
\frac{\bar{\varphi_0} \varphi_2}{2},  \quad 
 \Lambda F_{A'}^{1,1} = 
- \frac{i}{4} ( |\varphi_2|^2 - |\varphi_0 |^2 ) , 
\end{gather*}
where $\Lambda := (\wedge \omega)^{*}$, also $\bar{\varphi_0} \varphi_2 \in \Gamma (K_{M}^{-1}) = \Omega^{0,2} (X)$.

\subsection{Equations in six dimensions}

Let $X$ be a compact symplectic 6-manifold with symplectic form
$\omega$. 
We fix an almost complex structure $J$ compatible with $\omega$. 
We take a $Spin^{c}$-structure $s$ on $X$, and 
denote by $\xi$ the associated complex line bundle over $X$.

There is a $Spin^c$-structure canonically determined by $J$, which
we denote by $s_c$. 
The corresponding line bundle for $s_c$ is given by the
anti-canonical bundle $K_{X}^{-1}$. 
For a $Spin^c$-structure $s$, 
there is a complex line bundle $L$, and the corresponding line bundle 
for $s$,
which we denote by $\xi$, 
can be written as $\xi = K_{X}^{-1} \otimes L^2$, 
and a connection $A'$ of $\xi$ can be written as 
$A' = A_{c} + 2 A$, 
where $A_{c}$ is a connection of $K_{X}^{-1}$ and $A$ is a connection of $L$.

We put $\mathcal{C} := \mathcal{A} (\xi) \times \Omega^{0,3} (X) \times 
\Omega^{0} (X , L) \times \Omega^{0,2} (X, L)$, where $\mathcal{A}(\xi)$ is the set of all connections on $\xi$.   
We consider the following equations for $(A, u , (\alpha , \beta)) \in \mathcal{C}$. 
\begin{gather}
  \dbar_{A} \alpha + \dbar_{A}^{*} \beta = 0 , \quad 
 \dbar_{A} \beta =  - \frac{1}{2} \alpha u ,  
\label{meq1}\\ 
 F_{A'}^{0,2} + \dbar^{*} u = \frac{1}{4} \bar{\alpha} \beta , \quad 
 \Lambda F_{A'}^{1,1} = - \frac{i}{8} \left( | u |^2 + |\beta|^2 - |\alpha|^2 \right). 
\label{meq2} 
\end{gather}

We call $\mathcal{G} := \Gamma ( X , U(1))$ a  gauge group. This is a
set of  all smooth $U(1)$-valued functions. This group acts on 
solutions to  \eqref{meq1} and \eqref{meq2} by 
\begin{equation*}
A' \mapsto A' - g^{-1} d g , \quad 
u \mapsto u , \quad 
\alpha \mapsto g \alpha , \quad 
\beta \mapsto g \beta , 
\end{equation*}
where $g \in \mathcal{G}$. 
The equations \eqref{meq1} and \eqref{meq2} are equivariant under this
action, 
namely, 
if $(A', u, (\alpha , \beta))$ is a
solution to the equations \eqref{meq1} and  \eqref{meq2}, 
then so is 
$g (A', u, (\alpha , \beta))$ for any $g \in \mathcal{G}$. 
We say two solutions $(A_1' , u_1 , (\alpha_1 , \beta_1)) , \, 
(A_2' , u_2 , (\alpha_2 , \beta_2))$ are {\it gauge equivalent} if there exists a gauge
transformation $g \in \mathcal{G}$ such that 
$(A_1' , u_1 , (\alpha_1 , \beta_1)) = g (A_2' , u_2 , (\alpha_2 ,
\beta_2))$.

As in the Seiberg--Witten case, the stabilizer in $\mathcal{G}$ of 
$(A,u,(\alpha , \beta)) \in \mathcal{C}$ is trivial unless $\alpha
=\beta =0$. 
We then define the following.

\begin{definition}
$(A' , u, (\alpha , \beta ))$ is said to be {\it reducible} if $(\alpha ,
 \beta ) \equiv 0$. It is called {\it irreducible} otherwise.  
\end{definition}

Note that the stabilizer group in the case of reducibles is the group of
constant maps from $X$ to $S^1$, namely, it is $S^1$.

\subsection{Linearisation}
\label{sec:lin}

The differential of the equations at a solution $(A, u , (\alpha, \beta))$ is given by 
\begin{equation*}
\begin{split}
i \Omega^{0} (\R) &\xrightarrow{L_{1}} 
  i \Omega^{1} \oplus \Omega^{0,3} (X) \oplus \left( \Omega^{0,0} (L) \oplus \Omega^{0,2} (L)
 \right)  \\
 & \qquad \qquad \xrightarrow{L_{2}} \left( i \Omega^{2} \cap 
   \left( \Omega^{0,2} \oplus \Omega^{2,0} \right) \oplus i  \Omega^{0}
 \omega \right) \oplus \left( \Omega^{0,1} (L) \oplus \Omega^{0,3} (L)
 \right) ,  \\ 
\end{split}
\end{equation*}
where $L_1$ and $L_{2}$ are 
\begin{equation*}
\begin{split}
L_{1} (ig) &= ( 2 i d g , u g , -i \alpha g , - i \beta) \\ 
L_{2} ( h , \upsilon, (a, b)) 
&= ( P^{+} d (i h) - \frac{1}{8} i \text{Re} ( a \bar{\alpha} ) \omega  
  + \bar{\partial}^{*}  \upsilon + \frac{1}{4} ( \alpha \bar{b} -
 \bar{\alpha} b + \frac{1}{4} ( \beta \bar{a} - \bar{\beta} a ) , \\
 &  \qquad \bar{\partial}_A a + \bar{\partial}_A^{*} b + \pi^{0,1} (i h)
 \alpha / 2 , \bar{\partial}_A b + \pi^{0,3} i h \beta /2 +
 \alpha \upsilon /2  ) .\\  
\end{split}
\end{equation*}
We can discard the zero-th order terms in the above by deforming this by a homotopy. 
Note that deformations which do not change the symbol of the complex maintains the Euler characteristic. 
Such a deformation of the above complex gives us the direct sum of the following two elliptic complexes.  
\begin{equation}
0 \longrightarrow 
  \Omega^{0} (X ,i\R) 
\xrightarrow{d} 
 \Omega^{1} (X , i \R) \oplus  \Omega^{0,3} (X) 
\xrightarrow{P^{+} d + \bar{\partial}^{*}}   
\Omega^{+} (X , i \R) 
\longrightarrow 
0,  
\label{comp:eq1}
\end{equation}
\begin{equation}
0 
\longrightarrow 
\Omega^{0,0} (X, L)
\oplus 
\Omega^{0,2} (X, L) 
\xrightarrow{\bar{\partial}_{A} + \bar{\partial}_{A}^{*}} 
\Omega^{0,1} (X, L)
 \oplus 
\Omega^{0,3} (X, L) 
\longrightarrow 
0. 
\label{AHS2}
\end{equation}
where $\Omega^{+} (X, i\R):= \Omega^{0} (X , i\R ) \omega \oplus 
 \Omega^2 (X , i\R) \cap ( \Omega^{2,0} \oplus \Omega^{0,2})$. 

We then have the following.

\begin{proposition}
The virtual dimension of the moduli space $\mathcal{M}$ is given by 
\begin{equation}
 - \frac{1}{12} c_{1} (X) c_{2} (X) 
 - \frac{1}{24} c_{1} (L) 
\left( 2 c_{1} (X)^2 
 + 2 c_{2} (X) 
  + 6 c_{1} (L) c_{1} (X) 
   + 4 c_{1} (L)^2 \right).
\label{index} 
\end{equation}
\end{proposition}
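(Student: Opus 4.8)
The plan is to read off the virtual dimension of $\mathcal{M}$ as an index and to evaluate that index by the Atiyah--Singer index theorem. By the linearisation discussion above, the deformation operator of \eqref{meq1}--\eqref{meq2} decouples, at the level of principal symbols, into the two elliptic complexes \eqref{AHS1} and \eqref{AHS2}; hence, with the convention $\dim\mathcal{M}=\dim H^1-\dim H^0-\dim H^2$ for the deformation complex, the virtual dimension is minus the sum of their indices. Each index is the index of a Dolbeault--Dirac operator and is therefore given by a characteristic-class integral even when $J$ is not integrable. Writing these integrals in the familiar Euler-characteristic notation, the complex \eqref{AHS1} contributes $\chi(\mathcal{O}_X)=\int_X \mathrm{td}(TX)$ and the complex \eqref{AHS2} contributes $\chi(X,L)=\int_X \mathrm{ch}(L)\,\mathrm{td}(TX)$. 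Thus I would reduce the statement to showing that $-\chi(\mathcal{O}_X)-\chi(X,L)$ equals \eqref{index}.

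Next I would expand the two integrals. Writing $c_i:=c_i(X)$, the Todd class of a threefold has the relevant terms $\mathrm{td}(TX)=1+\tfrac{c_1}{2}+\tfrac{c_1^2+c_2}{12}+\tfrac{c_1 c_2}{24}+\cdots$, while $\mathrm{ch}(L)=e^{c_1(L)}=1+c_1(L)+\tfrac{c_1(L)^2}{2}+\tfrac{c_1(L)^3}{6}$. Retaining only the degree-$6$ part of each product and integrating gives $\chi(\mathcal{O}_X)=\tfrac{1}{24}c_1 c_2$ and $\chi(X,L)=\tfrac{1}{24}c_1 c_2+\tfrac{1}{12}c_1(L)(c_1^2+c_2)+\tfrac{1}{4}c_1(L)^2 c_1+\tfrac{1}{6}c_1(L)^3$. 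Adding these, negating, and factoring $c_1(L)$ out of the $L$-dependent terms produces exactly the expression in \eqref{index}.

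The Riemann--Roch expansion is routine; the step that needs the most care is the bookkeeping of identifications and signs. I must check that the realified Atiyah--Singer--Hitchin complex is genuinely symbol-isomorphic to the Dolbeault complex \eqref{AHS1} under the identifications indicated above, namely $\Omega^0(X)\oplus\Omega^0(X)\omega\cong\Omega^{0,0}(X)$ and $\Omega^1(X)\cong\Omega^{0,1}(X)$, with the $(\Omega^{2,0}\oplus\Omega^{0,2})$-part of $\Omega^+(X,i\R)$ supplying $\Omega^{0,2}(X)$ and the extra field $u\in\Omega^{0,3}(X)$ supplying the top term; it is precisely this $u$ that promotes the index from that of the untwisted three-term complex to the full $\chi(\mathcal{O}_X)$. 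Equally delicate is fixing the overall sign and the relative sign of the two indices, which is determined by the degrees in which the fields $(\dot A,\dot u)$ and $(\dot\alpha,\dot\beta)$ and the linearised equations sit in the deformation complex. Once these are settled, only the degree-$6$ characteristic numbers survive, and the five monomials $c_1 c_2$, $c_1(L)c_1^2$, $c_1(L)c_2$, $c_1(L)^2 c_1$ and $c_1(L)^3$ assemble into \eqref{index}.
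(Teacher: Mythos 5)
Your proposal is correct and follows essentially the same route as the paper: the virtual dimension is minus the sum of the indices of the complexes \eqref{AHS1} and \eqref{AHS2}, each evaluated by the index formula as $-\int_X \mathrm{td}(X) - \int_X \mathrm{ch}(L)\,\mathrm{td}(X)$, and your Todd/Chern-character expansion and final regrouping reproduce \eqref{index} exactly. The additional remarks you make about symbol identifications and sign conventions are points the paper treats implicitly, but they do not change the argument.
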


\begin{proof}
The virtual dimension is the sum of the indices 
of \eqref{comp:eq1} and \eqref{AHS2} with the opposite signs. 
Here, 
the index of \eqref{comp:eq1} can be computed by the following Dolbeault complex. 
\begin{equation}
0 
\longrightarrow 
\Omega^{0,0} (X)
\xrightarrow{\bar{\partial}} 
\Omega^{0,1} (X)
\xrightarrow{\bar{\partial}}
\Omega^{0,2} (X)
\xrightarrow{\bar{\partial}}
\Omega^{0,3} (X)
\longrightarrow 
0 
\label{AHS1}
\end{equation}
by identifying $\Omega^0 (X) \oplus
 \Omega^{0} \omega$ with $\Omega^{0,0}
(X)$; and $\Omega^1 (X) $ with $\Omega^{0,1} (X)$. 
Thus, from the index theorems, we obtain the virtual dimension to be 
$$ - \int_{X} \text{ch} (L) \cdot \text{td} (X) - \int_{X} \text{td} (X)
 . $$ 
As 
$ 
\text{ch} (L) 
= 1 + c_{1} (L) 
 + \frac{1}{2} c_{1} (L)^2 + \frac{1}{6} c_{1} (L)^3 $
and 
$\text{td} (X) 
= 1 + \frac{1}{2} c_{1} (X) + \frac{1}{12} \left( c_{1}(X)^2 + c_{2}
 (X) \right) + \frac{1}{24} c_{1} (X) c_{2} (X)$, we get \eqref{index}.
\end{proof}

\section{Invariant}
\label{sec:inv}

Let $X$ be a compact symplectic 6-manifold with symplectic form
$\omega$. 
We take an almost complex structure compatible with $\omega$, and 
a $Spin^{c}$-structure $c$ on $X$ with the characteristic line bundle $\xi$
being $K_{X}^{-1} \otimes L^2$, where $L$ is a line bundle on $X$.

\paragraph{Moduli space.}
We fix $p >6$ make the following to be smooth. 
Let $\ell$ be a positive integer. 
We consider the following Sobolev completion of the configuration
space. 
$$ \mathcal{C}_{L^{p}_{\ell}} 
:= \mathcal{A}_{L_{\ell}^{p}} (\xi) \times 
L^{p}_{\ell} ( \Lambda^{0,3} ) \times L^{p}_{\ell} ( ( \Lambda^{0,0} \oplus
\Lambda^{0,2}) \otimes L) ,$$
where $\mathcal{A}_{L^{p}_{\ell}} (\xi)$ is the space of $L^{p}_{\ell}$-connections
on $\xi$. 
We also consider $L^{p}_{\ell + 1}$-completion of the space of gauge group
$\mathcal{G} := \text{Map} (X ,U(1))$ 
to get smooth action on the configuration space $\mathcal{C}_{L^{p}_{\ell}}$. 
We then take the quotient 
$$ \mathcal{M}_{L^{p}_{\ell}}  := 
\{ (A, u , (\alpha , \beta)) \in \mathcal{C}_{L^{p}_{\ell}} \, : \, 
(A,u, (\alpha ,\beta)) \text{ satisfies \eqref{meq1} and \eqref{meq2}}
\} / \mathcal{G}_{L^{p}_{\ell+1}}  ,$$
and call it the {\it moduli space} of solutions to the equations
\eqref{meq1} and \eqref{meq2}.

We have the following. 
\begin{proposition}
Fix $p>6$. Let $\ell$ be either a positive integer or $\infty$. 
Let $(A, u, (\alpha , \beta)) \in \mathcal{C}_{L^{p}_{\ell}}$ be a solution to the equations
 \eqref{meq1} and \eqref{meq2}. 
Then there exists a gauge transformation $ g \in
 \mathcal{G}_{L^{p}_{\ell +1}}$
 such that $g (A, u , (\alpha , \beta))$ lies in $L^{p}_{\ell +1}$. 
\label{prop:psm}
\end{proposition}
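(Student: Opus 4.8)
The plan is to put the connection into Coulomb gauge relative to a fixed smooth reference connection and then bootstrap regularity through the resulting elliptic system, using that $L^{p}_{\ell}$ is a Banach algebra. Because the gauge group is abelian, the gauge-fixing step is linear and can be carried out explicitly by Hodge theory, which makes the argument cleaner than in the non-abelian case.

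First I would fix a smooth connection $A_{0}$ on $\xi$ and write $a := A' - A_{0} \in L^{p}_{\ell}(\Lambda^{1} \otimes i\R)$. A gauge transformation $g = e^{if}$ with $f \in L^{p}_{\ell+1}(X , \R)$ changes $a$ by an exact imaginary $1$-form, so up to a fixed constant the Coulomb condition $d^{*}(g \cdot a) = 0$ amounts to solving $\Delta f = d^{*} a$. The right-hand side lies in $L^{p}_{\ell - 1}$ and is $L^{2}$-orthogonal to the constants (the kernel of $\Delta$) because $\int_{X} d^{*} a = 0$ by Stokes; Hodge theory then yields a solution $f$, unique modulo a constant, and elliptic regularity for $\Delta$ gives $f \in L^{p}_{\ell+1}$, hence $g \in \mathcal{G}_{L^{p}_{\ell+1}}$. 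Replacing $(A', u , (\alpha , \beta))$ by $g \cdot (A' , u , (\alpha , \beta))$, I may therefore assume $d^{*} a = 0$, the matter fields still lying in $L^{p}_{\ell}$.

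Next I would treat the curvature equations in \eqref{meq2} together with the gauge condition as an elliptic system for the pair $(a , u)$. Writing $F_{A'} = F_{A_{0}} + da$, equations \eqref{meq2} prescribe $(da)^{0,2} + \dbar^{*} u$ and $\Lambda (da)^{1,1}$ in terms of the matter fields and fixed smooth data, while $d^{*} a = 0$ controls the remaining components of $a$; the resulting first-order operator on $(a , u)$ is precisely the one whose ellipticity is recorded by the complex \eqref{AHS1}. Since $p > 6 = \dim_{\R} X$ and $\ell \ge 1$, the space $L^{p}_{\ell}$ embeds in $C^{0}$ and is a Banach algebra, so every quadratic right-hand side occurring in \eqref{meq1} and \eqref{meq2} — namely $\alpha u$, $\bar{\alpha}\beta$, $|u|^{2}$, $|\beta|^{2}$ and $|\alpha|^{2}$ — lies in $L^{p}_{\ell}$, even though $u$ itself reappears through $|u|^{2}$. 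Elliptic regularity for this system then upgrades $(a , u)$, and hence $A'$, to $L^{p}_{\ell+1}$.

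With $A'$, and therefore the connection $A$ on $L$, now of class $L^{p}_{\ell+1}$, I would bootstrap the matter fields through \eqref{meq1}. The operator $\dbar_{A} + \dbar_{A}^{*}$ on $(\alpha , \beta) \in \Omega^{0,0}(L) \oplus \Omega^{0,2}(L)$ differs from the smooth elliptic operator $\dbar_{A_{0}} + \dbar_{A_{0}}^{*}$ underlying the complex \eqref{AHS2} by a zeroth-order term whose coefficients lie in $L^{p}_{\ell+1} \subset L^{p}_{\ell}$; moving this perturbation to the right and using the algebra property to place both $\alpha u$ and the perturbation products in $L^{p}_{\ell}$, elliptic regularity promotes $(\alpha , \beta)$ to $L^{p}_{\ell+1}$. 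This proves the statement for finite $\ell$; for $\ell = \infty$ one applies the finite case at each level, noting that the same $f$ solves the gauge-fixing equation at every regularity, so $g \in \mathcal{G}_{L^{p}_{\infty}}$ and all fields are smooth.

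The step I expect to be most delicate is the bookkeeping of the coupling between the connection and the matter fields: the operators $\dbar_{A}, \dbar_{A}^{*}$ in \eqref{meq1} depend on the very field whose regularity is being improved, so the two bootstrap steps must be taken in the right order — first gain a derivative on $(a , u)$ from the curvature equations and the Coulomb condition, and only then feed the improved $A$ into \eqref{meq1}. Tracking at each stage which Sobolev multiplication and which elliptic estimate is invoked, and checking that the gauge transformation genuinely lands in $\mathcal{G}_{L^{p}_{\ell+1}}$ and not merely in $\mathcal{G}_{L^{p}_{\ell}}$, is where the real care is needed.
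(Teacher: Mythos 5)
Your proof is correct, and its skeleton --- Coulomb gauge relative to a fixed smooth reference connection, followed by elliptic bootstrapping of the gauge-fixed system --- is the same as the paper's. The difference lies in how the Coulomb gauge is produced: the paper simply cites Uhlenbeck's local slice theorem (\cite[Th.~1.3]{U}, see also \cite[Th.~8.1]{WK}) to obtain $g$ with $d_{A}^{*}(g(A)-A_{0})=0$, and then asserts that \eqref{meq1} and \eqref{meq2} together with this gauge condition form an elliptic system; you instead exploit the fact that $\mathcal{G}$ is abelian to solve the gauge-fixing condition directly, via $\Delta f = d^{*}a$ (up to a factor of $i$, since $a$ is $i\R$-valued while $f$ is real-valued), using Hodge theory and elliptic regularity for $\Delta$. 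Your route buys self-containedness --- existence, uniqueness modulo constants, and regularity of $f$ are elementary --- and it makes transparent that one and the same gauge transformation works at every Sobolev level, which is exactly what the $\ell=\infty$ case requires; the cost is that it does not generalize to nonabelian gauge groups, where a genuine slice theorem is unavoidable. Your explicit two-step bootstrap (first $(a,u)$ via \eqref{meq2} plus $d^{*}a=0$, whose ellipticity is recorded by \eqref{AHS1}, then $(\alpha,\beta)$ via \eqref{meq1} and \eqref{AHS2}) fills in what the paper leaves implicit. One small remark: the ordering you flag as delicate is actually harmless --- the matter bootstrap already goes through with $A$ merely of class $L^{p}_{\ell}$, since the zeroth-order perturbation $a^{0,1}\cdot(\alpha,\beta)$ and the quadratic right-hand sides lie in $L^{p}_{\ell}$ by the algebra property either way, so the two steps could be run in either order or simultaneously.
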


\begin{proof}
This is a consequence of a local slice theorem \cite[Th.~1.3]{U} (see
 also \cite[Th.~8.1]{WK})
 saying that there exists a smooth connection $A_{0}$ and a gauge
 transformation $g \in \mathcal{G}_{L^{p}_{\ell +1}}$ such that $d_{A}^{*} (g
 (A) - A_{0}) =0$. 
Then the assertion holds since the equations \eqref{meq1} and \eqref{meq2}
 with the above gauge condition from an elliptic system. 
\end{proof}
Hence we abbreviate the Sobolev indices on the moduli space hereafter.

\paragraph{Virtual Neighbourhood. }
We consider the following case. 
\begin{itemize}
\item[(A1)] There are no reducible solutions; and 
\item[(A2)] the moduli space is compact.  
\end{itemize}
Assuming these (A1) and (A2), one can invoke the virtual neighbourhood
method by Ruan \cite{R} to define an integer-valued invariant from
the moduli space. 
As for the condition (A1), we have the following.

\begin{proposition}
Let $X$ be a compact symplectic 6-manifold, and 
let $\xi$ be the characteristic line bundle of a $Spin^{c}$-structure on
 $X$. 
If $\deg \xi < 0$, then there are no reducible solutions to
 the equations. 
\label{prop:red}
\end{proposition}

\begin{proof}
If $\alpha = \beta =0$, then it contradicts $\deg \xi < 0$ as 
$\deg \xi = \frac{1}{16 \pi} \int_{X} 
( |u|^2 + |\beta|^2 - |\alpha|^2  ) vol$. 
Thus, the assertion holds. 
\end{proof}

With regard to the condition (A2), we prove the moduli spaces are compact when the underlying manifold is K\"{a}hler in Section \ref{sec:comp}.

The general setting is as follows. 
Let $\mathcal{B}$ be a smooth Banach manifold, and let $\mathcal{F}$ be a smooth Banach bundle over $\mathcal{B}$. 
Also we consider a nonlinear elliptic differential operator $F: \mathcal{B} \to \mathcal{F}$.

\begin{definition}[\cite{R}]
A triple $(\mathcal{B}, \mathcal{F}, F)$ is said to be a {\it compact smooth triple} if $F^{-1} (0)$ is compact. 
\end{definition}

In our case, we put $\mathcal{B} := \mathcal{C} / \mathcal{G}$, 
$\mathcal{F}:= L^{p}_{1} ( \Lambda^{+} \otimes i \R) \times 
L^{p}_{1} ( (\Lambda^{0,1} \oplus \Lambda^{0,3}) \otimes L )$, 
where $\Lambda^{+} := 
\Lambda^{0} \omega \oplus \left( \Lambda^2 \cap ( \Lambda^{2,0} \oplus
\Lambda^{0,2} ) \right)$ 
and $F : \mathcal{B} \to \mathcal{F}$ defined by the equations 
\eqref{meq1} and \eqref{meq2}. 
Then, if the condition (A1) and (A2) are satisfied, this $( \mathcal{B} , \mathcal{F}, F)$   
forms a compact-smooth triple.

From this compact-smooth triple, one can construct a {\it virtual neighbourhood} $(U, \R^{k} , S)$ of
$\mathcal{M}$  as in \cite{R} with $U$ being a smooth neighbourhood of dimension
$- \text{ind} (L) +k$, where $L$ is the linearised operator of the
equations \eqref{meq1} and \eqref{meq2}, and $k \in \Z_{\ge 0}$ 
such that $\mathcal{M} \times \{ 0 \} \subset U \subset \mathcal{B}
\times \R^{k}$, 
and $S : U \to \R^{k}$ with $S^{-1} (0) =
\mathcal{M}$. 
Here, the orientation of $U$ is given by orienting the determinant line
bundle $\det (L) $ of the linearised operator $L$ from an orientation of 
$H^0 (X, i \R) , H^{1} (X, i\R), H^{0,3} (X)$ and $H^{+} (X, i \R)$. 
One can then define   
a {\it virtual neighbourhood invariant} $\mu_{F}$ in \cite{R} as
follows. 
(i) For $\text{ind}(L)=0$, $\mu_{F}$  is defined to be the algebraic
counting of points in $S^{-1}(y)$ for a regular value $y$; and   
(ii) for $ - \text{ind} (L) >0$, $\mu_{F}$ is defined as 
$\mu_{F} : H^{- \text{ind} (L)} ( \mathcal{B} ,\Z) \to \Z$ 
by $\mu_{F} (\alpha) := \alpha ( [S^{-1} (y)])$ for a regular value $y$, 
where $\alpha \in H^{ - \text{ind} (L)} (\mathcal{B} , \Z)$. 
In \cite[Prop.~2.6]{R}, Ruan proved that the above $\mu_{F}$ is
independent of $y$ and a triple $(U,
\R^k ,S)$.

\paragraph{Invariant.}
We denote by $\mathcal{C}^{*}$ the open subset of $\mathcal{C}$
consisting of irreducible equivalence classes. 
We consider the subgroup $\mathcal{G}_{0}$ of $\mathcal{G}$ consisting
of all gauge transformations which are trivial on the fibre over a fixed
point $x \in X$. 
This is the kernel of the morphism $\mathcal{G} \to S^1$ defined by
evaluating on the fibre over $x$. 
We then consider the quotient $\mathcal{B}^{0} 
:= \mathcal{C}^{*} / \mathcal{G}_{0}$. 
This is the total space of a principal $S^1$-bundle, we denote it by
$\ell$, over $\mathcal{B}^{*}$. 
Then we define an integer $n_{X} (c)$ by 
(i) $\mu_{F}$ if $\text{ind}(L)=0$; (ii) 
$\mu_{F} (c_{1} (\ell)^{-\text{ind}(L)/2})$ 
if $- \text{ind}(L) >0$; and 
(iii) $0$ if $- \text{ind}(L) <0$.

Examples are given in the next section.

\section{Invariants for compact K\"{a}hler threefolds}
\label{sec:kahler}

We describe the equations on compact K\"{a}hler threefolds in Section
\ref{sec:k}. 
In Section \ref{sec:kc}, we describe the moduli spaces for the case $\deg \xi <0$, where 
$\xi$ is the characteristic line bundle for a $Spin^c$-structure on
a compact K\"{a}hler threefold. 
In Section \ref{sec:comp}, we prove that the moduli spaces are compact
if the underlying manifold is a compact K\"{a}hler threefold. 
In Section \ref{sec:kco}, we compute the integers $n_{X} (c)$ defined in 
Section \ref{sec:inv} in some
cases.

\subsection{The equations on compact K\"{a}hler threefolds}
\label{sec:k}

Firstly, we have the following. 
\begin{proposition}
Let $X$ be a compact K\"{a}hler threefold. 
Then the equations \eqref{meq1} and \eqref{meq2} reduce to the
 following. 
\begin{gather}
 \bar{\partial}_{A} \alpha 
  = \bar{\partial}_{A} \beta 
  = \bar{\partial}_{A}^{*} \beta = \alpha u = 0 , 
\label{keq1} \\ 
 F_{A'}^{0,2} = \bar{\partial}^{*} u = \bar{\alpha}\beta = 0  , \quad 
 i \Lambda F_{A'}^{1,1} = \frac{1}{8} \left( |u |^2 + |\beta|^2 - |\alpha|^2 \right)
\label{keq2}
\end{gather}
\label{eqk}
\end{proposition}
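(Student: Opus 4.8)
The plan is to argue by a Bochner/Weitzenb\"urger computation: take the $L^2$-inner products that the two Dirac equations in \eqref{meq1} and the two curvature equations in \eqref{meq2} manifestly satisfy, integrate by parts using the Kähler identities, and show that the result is a sum of nonnegative $L^2$-quantities which is forced to vanish. I would use the Kähler hypotheses in three places: the Chern connection $A_c$ on $K_X^{-1}$ has $F_{A_c}^{0,2}=0$, so that $F_{A'}^{0,2}=2F_A^{0,2}$ and $\dbar_A^2\alpha=F_A^{0,2}\alpha=\tfrac12 F_{A'}^{0,2}\alpha$; the Bianchi identity gives $\dbar F_{A'}^{0,2}=0$; and $\Omega^{0,4}(X)=0$ on a threefold. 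It is convenient to split the argument into a first stage proving $\dbar_A\alpha=0$, $\dbar_A^*\beta=0$ and $F_{A'}^{0,2}=0$, and a second stage proving the remaining pointwise identities $\bar\alpha\beta=0$, $\dbar^*u=0$ and $\alpha u=0$ (equivalently $\dbar_A\beta=0$, by the second equation of \eqref{meq1}).

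For the first stage, write $R:=\langle \dbar_A\alpha,\dbar_A^*\beta\rangle$. Since $\dbar_A^*$ is the formal adjoint of $\dbar_A$, one has $R=\langle \dbar_A^2\alpha,\beta\rangle=\tfrac12\langle F_{A'}^{0,2}\alpha,\beta\rangle=\tfrac12\langle F_{A'}^{0,2},\bar\alpha\beta\rangle$, the last step being the adjunction between multiplication by $\alpha$ and the $L$-contraction $\bar\alpha\,\cdot$ appearing in \eqref{meq2}. Now pair the first equation of \eqref{meq2} with $F_{A'}^{0,2}$ in $L^2$; the cross term vanishes by Bianchi, $\langle \dbar^*u,F_{A'}^{0,2}\rangle=\langle u,\dbar F_{A'}^{0,2}\rangle=0$, leaving $\|F_{A'}^{0,2}\|^2=\tfrac14\langle\bar\alpha\beta,F_{A'}^{0,2}\rangle=\tfrac12\overline{R}$. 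Hence $R$ is real and $R=2\|F_{A'}^{0,2}\|^2\ge 0$. On the other hand, squaring the first equation of \eqref{meq1} gives $0=\|\dbar_A\alpha+\dbar_A^*\beta\|^2=\|\dbar_A\alpha\|^2+\|\dbar_A^*\beta\|^2+2R$. As all three summands are now nonnegative, each vanishes: $\dbar_A\alpha=0$, $\dbar_A^*\beta=0$, and $R=0$, whence $F_{A'}^{0,2}=0$.

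For the second stage I would bring in the real equation of \eqref{meq2}. With $F_{A'}^{0,2}=0$ the first equation of \eqref{meq2} reads $\dbar^*u=\tfrac14\bar\alpha\beta$, and $\alpha$ is now a holomorphic section of the holomorphic line bundle $(L,\dbar_A)$. The natural identity to exploit is obtained by pairing the second equation of \eqref{meq1} against $\alpha u$ and integrating by parts (equivalently, by applying $\dbar$ to $\dbar^*u=\tfrac14\bar\alpha\beta$ and using $\Delta_{\dbar}u=\dbar\dbar^*u$); in either route the Leibniz rule produces, besides the manifestly good terms $\int_X|\alpha|^2|\beta|^2$ and $\int_X|\alpha|^2|u|^2$, a single cross term $\langle u,\,h_L(\beta,\partial_A\alpha)\rangle$ involving the $(1,0)$-derivative of $\alpha$, where $h_L$ is the Hermitian metric on $L$. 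The plan is to evaluate this cross term by one further integration by parts, moving $\partial_A$ off $\alpha$ via the Kähler identity $\partial_A^*=i[\Lambda,\dbar_A]$ and then feeding in $\dbar u=0$ and $\dbar_A\beta=-\tfrac12\alpha u$; the commutator of $\nabla^{1,0}$ and $\nabla^{0,1}$ that appears is exactly the curvature, at which point the real equation of \eqref{meq2} is substituted for $i\Lambda F_{A'}$. I expect the canonical-connection curvature contributions coming from the $\Omega^{0,3}$-factor (that is, from $u$, a section of $\overline{K_X}$) and from the $\Omega^{0,2}$-factor carrying $\beta$ to cancel those coming from $\alpha$, leaving a sum of squares that forces $\int_X|\alpha|^2|\beta|^2=\int_X|\alpha|^2|u|^2=0$; pointwise this gives $\bar\alpha\beta=0$ and $\alpha u=0$, and then $\dbar^*u=\tfrac14\bar\alpha\beta=0$ and $\dbar_A\beta=-\tfrac12\alpha u=0$, which together with the first stage is exactly \eqref{keq1}--\eqref{keq2}.

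The hard part is precisely the evaluation of the cross term $\langle u,h_L(\beta,\partial_A\alpha)\rangle$ in the second stage, and the curvature bookkeeping around it. I would emphasise one trap: purely algebraic manipulation among the holomorphic equations (the two equations of \eqref{meq1} together with $\dbar^*u=\tfrac14\bar\alpha\beta$) is self-consistent and closes up to the tautology $0=0$, so it cannot by itself produce the decoupling. The genuine inputs are the Bianchi identity in the first stage, which breaks the symmetry and yields $R\ge 0$, and the real moment-map equation of \eqref{meq2} together with the Kähler identities in the second stage; arranging the canonical curvature terms to cancel across the three sectors is the delicate computation, and is the step I expect to occupy the bulk of the work.
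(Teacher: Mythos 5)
Your first stage is correct and complete, and it is a tidy repackaging of ingredients that also appear in the paper's argument: the identity $\langle \dbar_{A}\alpha , \dbar_{A}^{*}\beta\rangle = \tfrac{1}{2}\langle F_{A'}^{0,2}\alpha , \beta \rangle$ combined with the first equation of \eqref{meq1} is the same algebra as the paper's \eqref{eq:kr2}, and the vanishing $\langle \dbar^{*}u , F_{A'}^{0,2}\rangle = 0$ is the same Bianchi input that the paper uses to derive \eqref{eq:kr3}. Pairing the first equation of \eqref{meq2} with $F_{A'}^{0,2}$ and then expanding $0 = \| \dbar_{A}\alpha + \dbar_{A}^{*}\beta \|^{2}$ does give $\dbar_{A}\alpha = \dbar_{A}^{*}\beta = 0$ and $F_{A'}^{0,2} = 0$, hence $\dbar^{*}u = \tfrac{1}{4}\bar{\alpha}\beta$.

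The genuine gap is your second stage, which is where the actual content of the proposition lies (the decoupling $\bar{\alpha}\beta = 0$ and $\alpha u = 0$). What you give there is a programme, not a proof: the decisive claim --- that after moving $\partial_{A}$ by the K\"{a}hler identities and substituting the moment-map equation for $i\Lambda F_{A'}$, the curvature contributions of the $\alpha$, $\beta$ and $u$ sectors cancel and leave a sum of squares --- is precisely the assertion that needs to be verified, and you explicitly defer it (``the delicate computation \dots\ the bulk of the work''). Nothing in the proposal shows the cancellation happens, and if it does not, stage two yields nothing. You should also be aware that the paper finishes by a completely different and much shorter route, which never invokes the real equation at all: it evaluates the single cross term $\langle (\dbar_{A}\bar{\alpha})\wedge\beta , u\rangle$ twice --- once through the $\beta$-equation, by expanding $\|\dbar_{A}\beta\|^{2} = -\tfrac{1}{2}\langle \beta , \dbar_{A}^{*}(\alpha u)\rangle$ (its \eqref{eq:kr1}, then \eqref{eq:kr2}), and once through the $u$-equation, by applying $\dbar$ to the first equation of \eqref{meq2} and pairing with $u$ (its \eqref{eq:kr3}) --- and combines the two evaluations into $\|\dbar_{A}\beta\|^{2} + 2\|\dbar^{*}u\|^{2} + \tfrac{1}{4}\| |\alpha| |u| \|^{2} + \tfrac{1}{2}\|\dbar_{A}^{*}\beta\|^{2} + \tfrac{1}{8}\| |\alpha| |\beta| \|^{2} = 0$. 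This is exactly the kind of manipulation your ``trap'' paragraph declares can only close up to $0=0$; the paper's identity is nontrivial precisely because, in its bookkeeping, the cross term enters \eqref{eq:kr1} and \eqref{eq:kr3} with opposite signs. So your proposal and the paper are in direct conflict on one concrete, checkable point: the relative sign of the Leibniz cross term in $\dbar_{A}^{*}(\alpha u)$ versus in $\dbar(\bar{\alpha}\beta)$. Your submission raises this issue but does not settle it, and settling it is unavoidable: if the signs are opposite, as the paper has them, the decoupling follows at once and your stage two is superfluous; if consistent conventions make the signs agree, as your trap claim implies, then the paper-style combination degenerates to a tautology after your stage one, and your unexecuted stage two is the only proof on the table. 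Either way, as it stands the proposition is not proven by your proposal.
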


\begin{proof}
Using the second equation in \eqref{meq1}, we get 
\begin{equation}
\begin{split}
|| \bar{\partial}_{A} \beta ||^{2}_{L^2} 
 &=  \langle \beta , \bar{\partial}_{A}^{*} \bar{\partial}_{A} \beta \rangle_{L^2} \\
 &=  \frac{1}{2} \langle \beta , - \bar{\partial}_{A}^{*} ( \alpha u) \rangle_{L^2} \\
 &= -  \frac{1}{2} \langle \beta \wedge  \bar{\partial}_{A} \bar{\alpha} , u \rangle_{L^2} 
    -  \frac{1}{2} \langle \beta , (\bar{\partial}_{A}^{*} u) \alpha \rangle_{L^2},  \\
\end{split}
\label{eq:kr1}
\end{equation}
where we used $\bar{\partial}_{A}^{*} = \bar{*} \bar{\partial}_A \bar{*}$ at the last identity. 
The second term in the last line of the above \eqref{eq:kr1} can be
 computed as follows.  
\begin{equation}
\begin{split}
 \langle \beta ,  (\bar{\partial}^{*}  u) \alpha \rangle_{L^2}
&= \langle \beta , -F_{A'}^{0,2} \alpha \rangle_{L^2} 
   + \frac{1}{4} || |\alpha| | \beta | ||^{2}_{L^2} \\ 
&= \langle \beta , - \bar{\partial}_{A} \bar{\partial}_{A} \alpha
 \rangle_{L^2} 
  +  \frac{1}{4}|| |\alpha | | \beta | ||^2_{L^2} \\
&= \langle \beta , \bar{\partial}_{A} \bar{\partial}_{A}^{*} \beta \rangle_{L^2} 
    + \frac{1}{4} || |\alpha| | \beta | ||^{2}_{L^2} \\
&= || \bar{\partial}_{A}^{*} \beta ||^{2}_{L^2} 
+  \frac{1}{4} || |\alpha | | \beta | ||^{2}_{L^2}. \\
 \end{split}
\label{eq:kr2}
\end{equation}
On the other hand, 
from the first equation in \eqref{meq2} and the identity
$\bar{\partial}_{A} F_{A}^{0,2} = 0$ which holds 
for an integrable complex structure, 
we get 
$$\bar{\partial} \bar{\partial}^{*} u = 
\frac{1}{4} ( \bar{\partial}_{A} \bar{\alpha}) \wedge \beta 
+ \frac{1}{4} \bar{\alpha} \bar{\partial}_{A} \beta . 
$$  
From this we obtain 
\begin{equation}
\begin{split}
|| \bar{\partial}^{*} u ||^{2}_{L^2} 
 &= \langle u ,  \bar{\partial} \bar{\partial}^{*} u \rangle_{L^2} \\
 &= \frac{1}{4} \langle u , (\bar{\partial}_{A} \bar{\alpha}) \wedge \beta \rangle_{L^2}  
     + \frac{1}{4} \langle u , \bar{\alpha} \bar{\partial}_{A} \beta \rangle_{L^2} \\
 &=  \frac{1}{4} \langle u , (\bar{\partial}_{A} \bar{\alpha}) \wedge \beta \rangle_{L^2} 
     - \frac{1}{8} || |\alpha| |u| ||^{2}_{L^2}.  
\end{split}
\label{eq:kr3}
\end{equation}
Hence, from \eqref{eq:kr1}, \eqref{eq:kr2} and \eqref{eq:kr3}, 
we get  
\begin{equation*}
 || \bar{\partial}_{A} \beta ||^{2}_{L^2}  
  + 2 || \bar{\partial}^{*} u ||^{2}_{L^2} 
  + \frac{1}{4} || | \alpha | | u | ||^{2}_{L^2} 
  + \frac{1}{2} || \bar{\partial}_{A}^{*} \beta ||^{2}_{L^2}
  + \frac{1}{8} || | \alpha | | \beta | ||^{2}_{L^2}  
  =0 . 
\end{equation*}
Thus, the assertion holds. 
\end{proof}

We define the degree of $\xi$ by 
$$\deg \xi := c_{1} (\xi) \cdot [\omega^2] 
 = \frac{i}{2 \pi} \int_{X} F_{A'} \wedge \omega^2 . 
$$

\begin{proposition}
Let $X$ be a compact K\"{a}hler threefold, and let $\xi$ be the
 characteristic line bundle of a $Spin^c$-structure on $X$. 
Let $(A,u, (\alpha ,\beta))$ be a solution to the equations \eqref{meq1}
 and \eqref{meq2}. Then the following holds. 
\begin{enumerate}
\item[$\rm (i)$]
If $\deg \xi < 0$, then $\beta \equiv 0$ and $u \equiv 0$. 
\item[$\rm (ii)$] 
If $\deg \xi > 0$, then $\alpha \equiv 0$. 
\item[$\rm (iii)$] 
If $\deg \xi =0$, then $\alpha \equiv 0$, $\beta \equiv 0, u \equiv 0$. 
\end{enumerate}
\label{prop:dc}
\end{proposition}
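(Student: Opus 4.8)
The plan is to integrate the equations against $\omega^2/2!$ (equivalently, to take the $L^2$-inner product of the last equation in \eqref{meq2} with the constant function) and read off sign information from the degree. Concretely, I would start from the curvature identity
\begin{equation*}
\deg \xi = \frac{i}{2\pi} \int_X F_{A'} \wedge \omega^2
= \frac{1}{2\pi} \int_X \left( i \Lambda F_{A'}^{1,1} \right) \frac{\omega^3}{3},
\end{equation*}
using that only the $(1,1)$-part of $F_{A'}$ pairs nontrivially with $\omega^2$ and that $\Lambda$ is the adjoint of wedging with $\omega$. Substituting the already-established K\"ahler equation $i\Lambda F_{A'}^{1,1} = \tfrac{1}{8}(|u|^2 + |\beta|^2 - |\alpha|^2)$ from \eqref{keq2}, this turns $\deg\xi$ (up to a fixed positive constant) into the integral
\begin{equation*}
\int_X \left( |u|^2 + |\beta|^2 - |\alpha|^2 \right) \frac{\omega^3}{6}.
\end{equation*}
So the degree is, up to a positive factor, $\|u\|^2_{L^2} + \|\beta\|^2_{L^2} - \|\alpha\|^2_{L^2}$.

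Next I would exploit the extra algebraic constraints that Proposition \ref{eqk} has already extracted on a K\"ahler threefold, namely that any solution satisfies $\alpha u = 0$, $\bar\alpha \beta = 0$, and $\dbar_A \alpha = 0$ pointwise. The relations $\alpha u = 0$ and $\bar\alpha\beta = 0$ say that on the open set where $\alpha \ne 0$ one has $u = 0$ and $\beta = 0$, and conversely $\alpha$ vanishes wherever $u$ or $\beta$ is nonzero; moreover $\dbar_A\alpha=0$ makes $\alpha$ a holomorphic section of $L$, so its zero set is a proper analytic subvariety unless $\alpha\equiv 0$. This unique-continuation/holomorphicity feature is what lets me convert ``$\alpha$ and $(u,\beta)$ have disjoint supports'' into a genuine dichotomy rather than a local statement.

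For case (iii), $\deg\xi = 0$ forces $\|u\|^2 + \|\beta\|^2 = \|\alpha\|^2$, and I would combine this with the pointwise orthogonality of supports to conclude all three vanish: if $\alpha\not\equiv 0$ then it is a nonzero holomorphic section and $(u,\beta)$ must vanish on the dense complement of its zero locus, hence everywhere, giving $\|\alpha\|^2=0$, a contradiction; so $\alpha\equiv 0$, whence $\|u\|^2+\|\beta\|^2=0$. For case (i), $\deg\xi < 0$ gives $\|u\|^2 + \|\beta\|^2 < \|\alpha\|^2$; in particular $\alpha\not\equiv 0$, so the holomorphicity argument forces $u\equiv 0$ and $\beta\equiv 0$ on the complement of the zero divisor of $\alpha$ and hence identically. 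Case (ii), $\deg\xi > 0$, is the mirror image: the integral is positive, so $(u,\beta)\not\equiv 0$, and wherever $u$ or $\beta$ is nonzero $\alpha$ vanishes; I then use that $\alpha$ is holomorphic (so its zero set, if nonempty and proper, is a divisor) together with the sign inequality to rule out $\alpha\not\equiv 0$, concluding $\alpha\equiv 0$.

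The main obstacle I expect is the rigorous passage from the pointwise support conditions to a clean global vanishing in cases (i) and (ii). The identities $\alpha u = 0$ and $\bar\alpha\beta = 0$ only separate the supports; turning this into ``$\alpha\equiv 0$'' or ``$u\equiv\beta\equiv 0$'' requires knowing that a holomorphic section of $L$ that is not identically zero vanishes only on a measure-zero analytic set, and then arguing that $(u,\beta)$, which is supported in that set, must vanish identically. I would lean on the holomorphicity $\dbar_A\alpha = 0$ (valid because the complex structure is integrable in the K\"ahler case) to guarantee the zero set of $\alpha$ is a proper divisor, and on continuity of $u,\beta$ to extend their vanishing from the dense open complement to all of $X$. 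Care is also needed to confirm that the sign of $\deg\xi$ alone determines which of the two is nonzero before invoking the support dichotomy; the integral computation above is what supplies exactly that bookkeeping.
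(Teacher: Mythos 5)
Your proposal is correct and follows essentially the same route as the paper: both combine the degree identity obtained by integrating the curvature equation $i\Lambda F_{A'}^{1,1} = \tfrac{1}{8}\left(|u|^2+|\beta|^2-|\alpha|^2\right)$ over $X$ with the pointwise constraints $\alpha u = \bar{\alpha}\beta = 0$ from Proposition \ref{eqk}, and then run the identical case analysis on the sign of $\deg \xi$. The only difference, a harmless and in fact slightly cleaner one, is how the support dichotomy is globalised: the paper invokes unique continuation for each of $\alpha$, $\beta$, $u$ starting from vanishing on an open set, whereas you use only the holomorphicity of $\alpha$ (its zero set is a proper analytic subset with dense complement when $\alpha \not\equiv 0$) together with continuity of $u, \beta$, which suffices because all three cases reduce to the single implication $\alpha \not\equiv 0 \Rightarrow u \equiv \beta \equiv 0$.
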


\begin{proof}
From $\bar{\alpha} \beta = 0$, either $\alpha$ or $\beta$ is zero on
 some open subset of $X$, thus 
either $\alpha$ or $\beta$ is zero on the whole of $X$ by unique
 continuation as 
$\bar{\partial}_{A} \alpha =0$ and 
$\bar{\partial}_{A} \beta = \bar{\partial}_{A}^{*} \beta =0$. 
Similarly, from $\alpha u =0$, either $\alpha$ or $u$ is zero on an
 open set in $X$, so either $\alpha$ or $u$ is zero on $X$ again by
 unique continuation as $\bar{\partial}_{A} \alpha =0$  and 
 $\bar{\partial}^{*} u =0$. 
On the other hand, from the second equation in \eqref{keq2}, we have 
\begin{equation}
 \deg \xi = \frac{i}{2 \pi} \int_{X} F_{A'} \wedge \omega^2 
 = \frac{1}{16 \pi} \int_{X} \left( |u|^2 + | \beta |^2 - | \alpha |^2 \right)
 vol. 
\label{eq:degeq}
\end{equation}

Firstly, we consider the case $\deg \xi < 0$. 
In this case, because of \eqref{eq:degeq}, $\alpha \equiv 0$ contradicts $\deg \xi <0$, 
thus we have $\alpha \not\equiv 0$. 
Then, from the above reasoning in the top of this proof, we get $\beta \equiv 0,  u \equiv 0$.

Secondly, we consider the case $\deg \xi > 0$. 
In this case, if $\beta \equiv 0$ and $u \equiv 0$, we get a
 contradiction again from \eqref{eq:degeq}. 
Thus, $\beta \not\equiv 0$ or $u \not\equiv 0$, and therefore $\alpha \equiv 0$.

Finally, we consider the case $\deg \xi =0$. 
If $\alpha \not\equiv 0$, then we get $\beta \equiv 0 , u \equiv 0$; 
and this results in  $\deg \xi < 0$. 
Hence $\alpha \equiv 0$. 
Then, as $\alpha \equiv 0$,  again from \eqref{eq:degeq}, we get $\beta \equiv 0$ and $u \equiv
 0$. 
\end{proof}

From Proposition \ref{prop:dc} (iii) above, 
we immediately get the following.

\begin{proposition}
Let $X$ be a compact K\"{a}hler threefold, 
and let $\xi$ be the
 characteristic line bundle of a $Spin^c$-structure on $X$. 
Let $(A,u, (\alpha ,\beta))$ be a solution to the equations \eqref{meq1}
 and \eqref{meq2}. Then, if $\deg \xi =0$, the moduli space is isomorphic to $H^1 (X , \R) / H^{1} (X,
 \Z)$. 
\end{proposition}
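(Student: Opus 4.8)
The plan is to show that when $\deg\xi = 0$ every solution is reducible and is pinned down, up to gauge, by its curvature alone, so that the moduli problem collapses to the standard computation of the Jacobian torus.

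First I would invoke Proposition \ref{prop:dc}(iii): since $\deg\xi = 0$ we have $\alpha\equiv 0$, $\beta\equiv 0$ and $u\equiv 0$. Thus a solution is nothing but a connection $A$ on $L$, equivalently $A' = A_c + 2A$ on $\xi$, and the equations \eqref{keq1}--\eqref{keq2} collapse to the two conditions
$$ F_{A'}^{0,2} = 0, \qquad \Lambda F_{A'}^{1,1} = 0 , $$
i.e. $\eta := iF_{A'}$ is a real, closed, primitive $(1,1)$-form.

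Next I would establish the key rigidity step: any such $\eta$ is harmonic, and hence \emph{uniquely determined} by the $Spin^c$-structure. Closedness gives $\partial\eta = \dbar\eta = 0$, and the K\"ahler identities $\partial^{*} = i[\Lambda,\dbar]$, $\dbar^{*} = -i[\Lambda,\partial]$ combined with $\Lambda\eta = 0$ yield $\partial^{*}\eta = \dbar^{*}\eta = 0$; here the K\"ahler (integrability) hypothesis is essential. So $\eta$ is the unique harmonic representative of its de Rham class, which is $2\pi c_{1}(\xi)$ and is fixed by the $Spin^c$-structure. Consequently, for any two solutions with associated connections $A_1', A_2'$ one has $F_{A_1'} = F_{A_2'}$, whence $a := A_1 - A_2 \in \Omega^{1}(X, i\R)$ satisfies $da = \tfrac12(F_{A_1'} - F_{A_2'}) = 0$, i.e. $a$ is closed. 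Non-emptiness I would settle by Hodge theory: using $\deg\xi = 0$ one realises the primitive harmonic representative of $c_1(\xi)$ as the curvature of a suitable connection on $\xi$, hence of an $A$ on $L$.

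Finally I would do the gauge-theoretic bookkeeping. Fixing one solution $A_0$, the solution set is the affine space $A_0 + \ker\left( d : \Omega^{1}(X, i\R)\to\Omega^{2}(X, i\R)\right)$. The gauge group $\mathcal{G} = \mathrm{Map}(X, U(1))$ acts by $a \mapsto a - g^{-1}dg$: the identity component contributes exact forms, while $\pi_0(\mathcal{G}) = [X, U(1)] \cong H^{1}(X,\Z)$ contributes the integral periods $\tfrac{1}{2\pi i}[g^{-1}dg]$. Quotienting closed forms by exact forms gives $H^{1}(X, i\R)$, and further quotienting by the lattice $2\pi i\, H^{1}(X,\Z)$ gives, after the canonical identification $H^{1}(X, i\R)/2\pi i\, H^{1}(X,\Z) \cong H^{1}(X,\R)/H^{1}(X,\Z)$, the stated torus. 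I expect the main obstacle to be the rigidity step, that is, showing that \emph{every} solution carries the same harmonic curvature so the moduli problem linearises to closed $1$-forms, together with the careful identification of the large-gauge lattice with $H^{1}(X,\Z)$.
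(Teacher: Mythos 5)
Your argument is correct, but it follows a genuinely different route from the paper's. Both proofs begin with Proposition \ref{prop:dc} (iii), which forces $\alpha \equiv \beta \equiv u \equiv 0$ and reduces the equations to $F_{A'}^{0,2} = 0$ and $\Lambda F_{A'}^{1,1} = 0$. At that point the paper cites the Hitchin--Kobayashi correspondence for line bundles: these are the Hermitian--Einstein equations on $\xi$, so $\mathcal{M}$ is identified with the moduli space of holomorphic structures on $\xi$, which is (a torsor over) $\mathrm{Pic}^{0}(X) = H^{0,1}(X)/H^{1}(X,\Z) \cong H^{1}(X,\R)/H^{1}(X,\Z)$. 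You avoid that citation entirely: your rigidity step (a closed, primitive, real $(1,1)$-form on a compact K\"{a}hler manifold is harmonic, by the K\"{a}hler identities) pins the curvature of every solution to the unique harmonic representative of the fixed class, so the solution set is an affine space over closed $1$-forms in $\Omega^{1}(X, i\R)$, and the gauge quotient --- exact forms from the identity component of $\mathcal{G}$, the period lattice from $\pi_{0}(\mathcal{G}) \cong H^{1}(X,\Z)$ --- yields the torus directly. Your version is more elementary and self-contained, and it makes explicit exactly where K\"{a}hlerness enters; the paper's is shorter and ties the moduli space to the complex-geometric picture of the Picard torus. One caveat: your Hodge-theoretic non-emptiness claim is not quite right as stated, since realising the harmonic representative of $c_{1}(\xi)$ as a curvature of the required type needs $c_{1}(\xi)$ to be of type $(1,1)$, which is not automatic for a merely smooth line bundle; but this step is also unnecessary, because the proposition's hypothesis already supplies a solution to serve as your base point $A_{0}$.
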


\begin{proof}
From Proposition \ref{prop:dc} (iii), we get $\alpha \equiv \beta \equiv
 u \equiv 0$, thus the equations reduce to the Hermitian--Einstein equations. 
Then, from the Hitchin--Kobayashi correspondence of the Hermitian--Einstein
 connection for line bundles, 
the moduli space $\mathcal{M}$ is identified with the moduli space of holomorphic
 structures on $\xi$. 
Since $\xi$ is topologically trivial, it is then isomorphic to $H^1 (X , \R) /
 H^1 (X , \Z)$.  
\end{proof}

\subsection{The moduli space for the negative degree case} 
\label{sec:kc}

Let $X$ be a compact K\"{a}hler threefold. 
In this subsection, we describe the moduli space $\mathcal{M}$ of
solutions to the equations \eqref{keq1} and \eqref{keq2} for the case $\deg \xi < 0$, where 
$\xi$ is the characteristic line bundle of a $Spin^c$-structure on
$X$. 
In this case, from Proposition \ref{prop:dc}, 
we have $\beta \equiv 0$ and $u \equiv 0$. 
Thus the equations reduce to the vortex equations (see \cite{B} and
\cite{G1}). 
We  have the following description of the moduli space, 
which was originally obtained by Thomas \cite{T} in the context of the
Seiberg--Witten equations on compact K\"{a}hler threefolds.

\begin{proposition}[\cite{T}, Th.~2.6]
Let $X$ be a compact K\"{a}hler threefold, and let $c$ be a
 $Spin^c$-structure on $X$ with $\deg \xi <0$, 
where $\xi$ is the characteristic line bundle of the
 $Spin^c$-structure. 
Then the moduli space of solutions to the equations
 \eqref{keq1} and \eqref{keq2} can be 
 identified with $\bigcup_{\xi} \mathbb{P} H^{0} 
\left( X, (K_{X} \otimes \xi )^{1/2} \right)
$, where the union is taken through all holomorphic structures on
 $\xi$. 
\label{prop:mneg}
\end{proposition}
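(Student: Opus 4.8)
The plan is to start from the reduced equations in Proposition \ref{eqk} under the hypothesis $\deg\xi<0$, where Proposition \ref{prop:dc}(i) already gives $\beta\equiv 0$ and $u\equiv 0$. With these vanishing, the system \eqref{keq1}–\eqref{keq2} collapses to the two conditions $\bar{\partial}_{A}\alpha=0$ together with the moment-map equation $i\Lambda F_{A'}^{1,1}=-\tfrac{1}{8}|\alpha|^{2}$, which is precisely the abelian vortex equation for a connection $A$ on $L$ and a holomorphic section $\alpha$. First I would fix a holomorphic structure on $\xi$ (equivalently on $L$, since $\xi=K_X^{-1}\otimes L^{2}$), and read the condition $\bar{\partial}_{A}\alpha=0$ as saying that $\alpha$ is a holomorphic section of $L$ for that structure. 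The index bookkeeping relating $L$ to $(K_X\otimes\xi)^{1/2}$ is the reason the target space is $H^{0}(X,(K_X\otimes\xi)^{1/2})$ rather than $H^0(X,L)$ directly, so I would carefully track the relation $L=(K_X\otimes\xi)^{1/2}$ coming from $\xi=K_X^{-1}\otimes L^{2}$.

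The core of the argument is a Hitchin--Kobayashi / vortex correspondence: for each fixed holomorphic structure and each nonzero holomorphic section $\alpha$, there exists a unique (up to the residual $S^1$) solution of the moment-map equation $i\Lambda F_{A'}^{1,1}=-\tfrac18|\alpha|^2$ in the complex gauge orbit, provided a stability condition holds. Here the stability condition is exactly the sign condition $\deg\xi<0$, which guarantees solvability of the Kazdan--Warner / vortex PDE (this is the abelian case, so the relevant analytic input is the solvability of a scalar equation of the form $\Delta\phi + c\,e^{\phi}|\alpha|^2 = \text{const}$, governed by the degree). I would invoke this correspondence to say that the space of gauge-equivalence classes of solutions with a \emph{fixed} holomorphic structure and $\alpha\not\equiv0$ is in bijection with the nonzero holomorphic sections modulo the $\C^\ast$-action of complex scaling, i.e. with $\mathbb{P}H^{0}(X,(K_X\otimes\xi)^{1/2})$. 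The real-gauge stabilizer being $S^1$ on solutions and the complexified action identifying orbits is what turns $H^0\setminus\{0\}$ into its projectivization.

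Finally I would let the holomorphic structure on $\xi$ vary: distinct holomorphic structures generically give distinct moduli, and taking the union over all holomorphic structures on $\xi$ yields $\bigcup_{\xi}\mathbb{P}H^{0}(X,(K_X\otimes\xi)^{1/2})$, matching the statement. I would also record that $\alpha\equiv0$ is excluded because $\deg\xi<0$ forces $\alpha\not\equiv0$ via \eqref{eq:degeq}, so every point of the moduli space indeed lives in one of these projective spaces. The main obstacle I anticipate is the analytic heart of the vortex correspondence: establishing existence and uniqueness of the solution to the moment-map equation in each complex gauge orbit, and verifying that $\deg\xi<0$ is precisely the stability threshold that makes the scalar PDE solvable. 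Since this is the established abelian vortex theory (references \cite{B}, \cite{G1}) and the result is attributed to Thomas \cite{T}, I would cite that correspondence rather than reprove it, and spend my care instead on the line-bundle identification $L=(K_X\otimes\xi)^{1/2}$ and on checking that the residual gauge action correctly projectivizes $H^{0}$.
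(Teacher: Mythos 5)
Your proposal is correct and follows essentially the same route as the paper: reduce via Proposition \ref{prop:dc}(i) to the abelian vortex equations, invoke the Bradlow/Kazdan--Warner correspondence (which the paper carries out explicitly by the conformal change $e^{u}$ of the hermitian metric, reducing to the scalar equation whose solvability is exactly governed by $\deg\xi<0$), and then projectivize $H^{0}\left(X,(K_{X}\otimes\xi)^{1/2}\right)$ using that the residual gauge/scaling identifications are precisely the nonzero constants, finally taking the union over holomorphic structures on $\xi$. The only difference is one of packaging — you cite the vortex correspondence as a black box where the paper sketches the Kazdan--Warner step — which is immaterial.
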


\begin{proof}
This is derived from a result by Bradlow \cite[Th.~4.3]{B}. 
For the original Seiberg--Witten case, the corresponding result was
 described by Witten \cite{W} (see also \cite[\S 2]{FM}). 

Firstly, we fix a hermitian metric $k$ on $\xi$ and a section $\alpha \in
 \Gamma (X, L)$. 
We then vary the hermitian metric by $e^{u}$, where $u$ is
 a real-valued function. 
The induced unitary connection on $\xi$ can be written as $A_{k} + 2
 \partial u$, where $A_{k}$ is a unitary connection on $\xi$ induced from the metric $k$, 
and the second equation in \eqref{keq2} becomes  
\begin{equation}
\Delta u + \frac{1}{8} | \alpha |_{k}^2 e^{2u} = 
- i \Lambda F_{A_k}, 
\label{eq:bv}
\end{equation}
where $\Delta$ is the negative definite Laplacian on functions. 
From the assumption that $\deg \xi < 0$, 
we have 
$\int_{X} - i \Lambda F_{A_k} d V > 0$ 
and also $\frac{1}{8}|\alpha|_{k}$ is
 strictly positive somewhere. 
Thus, as in \cite{B}, 
we can invoke results by Kazdan--Warner \cite{KW} to deduce that there exists a unique
 solution $u$ to the equation \eqref{eq:bv}. 
Hence, for each fixed non-trivial section of $\left( K_{X} \otimes \xi
 \right)^{1/2}$, we have a solution to the equations \eqref{keq1} and
 \eqref{keq2}.

On the other hand, two sections differed by a non-zero constant are
 gauge equivalent solutions; and 
two gauge equivalent solutions $\alpha$ and $\alpha'$ represent the same
 point in $\mathbb{P} H^{0} 
\left( X, (K_{X} \otimes \xi )^{1/2} \right)$ 
since holomorphic automorphisms of a holomorphic line bundle consist of
 only non-zero constant functions. 
Hence the assertion holds. 
\end{proof}

\paragraph{Poincar\'{e} invariants.}
There is an algebraic counterpart of the Seiberg--Witten invariants,
 called {\it Poincar\'{e} invariants}, by D\"{u}rr, Kabanov and Okonek
 \cite{DKO}. 
These are defined through the Hilbert schemes of divisors on a smooth
 projective variety. 
They constructed a perfect obstruction theory in the sense of Behrend
 and Fantechi \cite{BF} on the Hilbert schemes under some assumptions,   
and defined the Poincar\'{e} invariants as virtual intersection
 numbers on the Hilbert schemes, which are consequently deformation
 invariants. 
The assumptions are satisfied for smooth projective surfaces, and the invariants were
 proved to be equivalent to the Seiberg--Witten invariants (see
 \cite{DKO}, 
 \cite{CK} for more detail).

The invariants can be defined other than the surfaces. 
In fact, D\"{u}rr, Kabanov and Okonek constructed the perfect obstruction theory under the
 assumption  that $H^{i} (\mathcal{O}_{D} (D)) =0$ for all $i > 1$ and
 for 
 any point $D$ in the Hilbert scheme of divisors on a smooth
 projective variety with a fixed topological type \cite[Th.~1.12]{DKO}. 
(Note that the obstruction space is $H^{1} (\mathcal{O}_{D} (D))$.) 
The above assumption is satisfied in the case we are describing in this
 subsection (i.e. the case for $\deg \xi < 0$) 
for example with $X$ being a compact Calabi--Yau or Fano threefold as below.  
(This was also mentioned in \cite[Remark~1.13]{DKO}.)

We  consider the following short exact sequence. 
$$ 
0  \longrightarrow \mathcal{O}_{X} \longrightarrow 
\mathcal{O}_{X} (D)  \longrightarrow  \mathcal{O}_{D} (D)
 \longrightarrow  0 . 
$$
From this, we get 
\begin{equation}
\cdots \longrightarrow  
H^{1} (\mathcal{O}_{X} (D)) \longrightarrow  
H^{2} (\mathcal{O}_{D} (D)) \longrightarrow  
H^{2} (\mathcal{O}_{X}) \longrightarrow 
H^{2} (\mathcal{O}_{X} (D)) \longrightarrow  \cdots . 
\label{les}
\end{equation}
Here, we think of $\mathcal{O}_{X} (D)$ as $L$. If $K_{X} \cong \mathcal{O}$ or $K_{X} <
 0$ and $\deg
 \xi < 0$, then $\deg L < 0$ as $L^2 = K_{X} \otimes \xi$. 
Thus from the Kodaira vanishing theorem, we obtain 
$H^{i} (\mathcal{O}_{X} (D)) =0$ for $i < 3$. 
Hence, from this and \eqref{les}, we get 
$$ H^{2} (\mathcal{O}_{D} (D)) \cong H^{2} (\mathcal{O}_{X})  . $$
From the Serre duality, we have $H^2 (\mathcal{O}_{X}) \cong H^1 (
 K_{X})^{\vee}$. 
Hence, if $X$ is a compact Calabi--Yau or Fano threefold, $H^{2}
 (\mathcal{O}_{D} (D)) =0$. 
(This is deduced from $H^{1} (K_{X}) \cong  H^{1} (\mathcal{O}_{X}) =0$ for the Calabi--Yau
 case, and from the Kodaira vanishing theorem for the Fano case.) 
Therefore, one can construct the Poincar\'{e} invariants of 
D\"{u}rr, Kabanov and Okonek in these cases. 
\qed

\subsection{Compactness of the moduli space}
\label{sec:comp}

In this subsection, we prove that the moduli spaces of solutions to the
equations  \eqref{meq1} and  \eqref{meq2} are compact when the underlying manifold is a
compact K\"{a}hler threefold.  
In order to do that, we first introduce the following rescaled
equations.

\paragraph{Rescaled equations.}
Let $X$ be a compact K\"{a}hler threefold. 
We introduce a scaling parameter $s>0$ to the equation \eqref{keq1} and
\eqref{keq2} as follows. 
\begin{gather}
 \bar{\partial}_{A} \alpha 
  = \bar{\partial}_{A} \beta 
  = \bar{\partial}_{A}^{*} \beta = \alpha u = 0 , 
\label{rkeq1} \\ 
 F_{A'}^{0,2} = \bar{\partial}^{*} u = \bar{\alpha}\beta = 0  , \quad 
 i \Lambda F_{A'}^{1,1} = \frac{s}{8} \left( |u |^2 + |\beta|^2 -
 |\alpha|^2 \right) .
\label{rkeq2}
\end{gather}

Similar rescaled equations for the Seiberg--Witten equations on compact
symplectic four-manifolds were
introduced by Taubes in \cite{Ta1} (see also \cite{Ta}), 
and the ones for the vortex equations and its asymptotic of
solutions as $s \to \infty$ were studied by Baptista \cite{Ba1}, \cite{Ba2} 
and Liu \cite{L}.

We define the moduli space of solutions to the equations
\eqref{rkeq1} and \eqref{rkeq2}, which we denote by $\mathcal{M}(s)$, in the same way as we do that for the
equations \eqref{meq1} and \eqref{meq2}. 
Note that the presence of the parameter $s$ does not affect the
statements thus far. 
We prove the following below.

\begin{proposition}
Let $X$ be a compact K\"{a}hler threefold. 
Then the moduli space $\mathcal{M} (s)$ is compact if $s$ is sufficiently
 large.   
\label{prop:comp}
\end{proposition}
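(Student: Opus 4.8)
The plan is to follow Taubes's strategy for the Seiberg--Witten equations on symplectic manifolds \cite{Ta1}, adapted to the K\"ahler setting: first establish a priori $C^{0}$-bounds on the fields $(\alpha,\beta,u)$ that are uniform over all solutions for fixed large $s$, then bound the curvature, and finally apply Uhlenbeck gauge fixing together with elliptic bootstrapping to extract convergent subsequences. Two structural features of \eqref{rkeq1}--\eqref{rkeq2} make the estimates transparent. First, the pointwise constraints $\alpha u =0$ and $\bar{\alpha}\beta=0$ together with the unique continuation argument of Proposition \ref{prop:dc} (which is insensitive to the scaling $s$) show that every solution lies in one of two branches: either $\alpha\equiv 0$, or $\beta\equiv u\equiv 0$; since $\deg\xi$ is a fixed topological quantity, only the branch compatible with its sign via the integrated identity \eqref{eq:degeq} is nonempty. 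Second, because $u\in\Omega^{0,3}(X)$ is untwisted and satisfies $\bar{\partial}^{*}u=0$ (with $\bar{\partial}u=0$ automatic in top bidegree), $u$ is an honest harmonic form lying in the fixed finite-dimensional space $H^{0,3}(X)$.

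First I would extract $L^{2}$-control from the last equation of \eqref{rkeq2}: its integrated (rescaled) form \eqref{eq:degeq} reads $\tfrac{s}{16\pi}\int_{X}(|u|^{2}+|\beta|^{2}-|\alpha|^{2})\,vol=\deg\xi$, so on the branch $\beta\equiv u\equiv 0$ one has $\int_{X}|\alpha|^{2}\,vol=O(1/s)$, while on the branch $\alpha\equiv 0$ one has $\int_{X}(|u|^{2}+|\beta|^{2})\,vol=O(1/s)$. I would then upgrade this to $C^{0}$. Since $\bar{\partial}_{A}\alpha=0$, the Bochner--Kodaira formula combined with $i\Lambda F_{A}=\tfrac12(i\Lambda F_{A'}-i\Lambda F_{A_{c}})$ and the last equation of \eqref{rkeq2} gives, on the branch $\beta\equiv u\equiv 0$, a differential inequality $\Delta|\alpha|^{2}\le -\tfrac{s}{8}|\alpha|^{4}+C|\alpha|^{2}$ with $\Delta$ the nonnegative Laplacian and $C$ depending only on $\sup_{X}|i\Lambda F_{A_{c}}|$; evaluating at an interior maximum yields $\sup_{X}|\alpha|^{2}\le C_{1}/s$. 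On the branch $\alpha\equiv 0$, finite-dimensionality of $H^{0,3}(X)$ turns the $L^{2}$-bound on $u$ into $\sup_{X}|u|^{2}\le C_{2}/s$, and a Bochner--Kodaira--Nakano identity for the harmonic $(0,2)$-form $\beta$, together with the largeness of $s$, passes from the $L^{2}$-smallness of $\beta$ to $\sup_{X}|\beta|^{2}\le C_{3}/s$. Thus all fields vanish uniformly as $s\to\infty$.

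With uniform $C^{0}$-bounds in hand, the last equation of \eqref{rkeq2} bounds $\Lambda F_{A'}^{1,1}$ in $C^{0}$; since $F_{A'}^{0,2}=0$ and the class $[F_{A'}]=-2\pi i\,c_{1}(\xi)$ is fixed, the K\"ahler identity relating $\int_{X}|F_{A'}|^{2}$ to $\int_{X}(\Lambda F_{A'})^{2}\,vol$ and the fixed topological quantity $\int_{X}F_{A'}\wedge F_{A'}\wedge\omega$ bounds $F_{A'}$, hence $F_{A}$, in $L^{2}$. Applying Uhlenbeck's theorem \cite{U} (as in Proposition \ref{prop:psm}) places the connections in Coulomb gauge relative to a fixed smooth reference connection; in this gauge \eqref{rkeq1}--\eqref{rkeq2} form an elliptic system, so the bounds bootstrap to uniform $L^{p}_{k}$-bounds for all $k$, and Rellich compactness produces a subsequence converging modulo gauge to a smooth solution. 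This proves that $\mathcal{M}(s)$ is compact for $s$ large; the rescaling $(\alpha,\beta,u)\mapsto s^{-1/2}(\alpha,\beta,u)$ with $A$ fixed, which carries solutions of \eqref{meq1}--\eqref{meq2} to solutions of \eqref{rkeq1}--\eqref{rkeq2}, then transfers compactness back to the original moduli space.

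The main obstacle I expect is the $\beta$-estimate on the branch $\alpha\equiv 0$. Unlike the holomorphic section $\alpha$, the $(0,2)$-form $\beta$ need not have a favorable curvature sign in its Weitzenb\"ock formula, and it is coupled to $u$ through the common curvature term $\tfrac{s}{8}(|u|^{2}+|\beta|^{2})$. The decisive point is to exploit the largeness of $s$, equivalently the $O(1/s)$ $L^{2}$-smallness already established, to dominate the indefinite terms and close the maximum principle; a secondary technical matter is securing uniform elliptic constants in the Uhlenbeck gauge as the background connection varies with the solution.
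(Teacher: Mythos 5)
Your field estimates match the paper's step (I): the branch dichotomy from Proposition \ref{prop:dc}, the maximum-principle bound on $\alpha$, and the observation that $u$ is an honest $\bar{\partial}$-harmonic form in the fixed finite-dimensional space $H^{0,3}(X)$ (so $L^2$-smallness upgrades to $C^0$-smallness for free) — the last point is in fact cleaner than the paper's Weitzenb\"ock argument for $u$. Your worry about the $\beta$-estimate on the branch $\alpha \equiv 0$ is misplaced: in the Weitzenb\"ock formula for $\beta$ the coupled term $\frac{s}{8}(|u|^2+|\beta|^2)\beta$ enters with a favorable sign, so evaluating at a maximum of $|\beta|^2$ closes the estimate directly, with no appeal to $L^2$-smallness (this is the paper's Lemma \ref{lem:estbu}).

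The genuine gap is in your third paragraph. You bound the curvature only in $L^2$ — via the $C^0$ bound on $\Lambda F_{A'}^{1,1}$, the vanishing of $F_{A'}^{0,2}$, and the topological identity for $\int_X F_{A'}\wedge F_{A'}\wedge \omega$ — and then invoke Uhlenbeck gauge fixing plus elliptic bootstrapping. On a $6$-manifold this does not work: Uhlenbeck's theorem \cite[Th.~1.5]{U} (see also \cite[Th.~A]{WK}) requires a uniform $L^p$ curvature bound with $p > n/2 = 3$, and $L^2$ is strictly subcritical in dimension six, so neither the Coulomb-gauge estimate nor the bootstrap closes (from $a \in L^2_1 \hookrightarrow L^3$ the quadratic terms land only in $L^{3/2}$, and elliptic regularity returns you to where you started; this is exactly the regime where bubbling can occur). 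Note also that Proposition \ref{prop:psm} is a regularity statement for a single solution, not a source of uniform constants along a sequence. Filling this hole is precisely the content of the paper's step (II), Proposition \ref{lem:bfa}, and it is the hardest part of the proof: writing $F_{A'} = F_{A'}^{+} + F_{A'}^{\perp}$, the component $F_{A'}^{+}$ is pointwise controlled by the equations and the field bounds, while the primitive part $F_{A'}^{\perp}$ — invisible to your pointwise information — is bounded in $L^{\infty}$ using the Bianchi identity, a Weitzenb\"ock formula, comparison functions $\phi_0$, $\phi$, $\psi$, and the Gilbarg--Trudinger local maximum principle \cite{GT}, following Taubes \cite{Ta} and Kanda \cite{Ka}. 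Once that uniform bound $|F_{A}| \leq C$ is available (hence $L^p$ bounds for all $p$), your final paragraph — Coulomb gauge, bootstrap, Rellich, and the rescaling bijection back to $\mathcal{M}(1)$ — coincides with the paper's step (III) and is correct.
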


Since $(A , u, (\alpha , \beta)) \mapsto 
(A , \sqrt{s} u, \sqrt{s} (\alpha , \beta))$ 
gives a bijection between $\mathcal{M} (s)$ and $\mathcal{M} = \mathcal{M}(1)$, 
we get the following.

\begin{Corollary}
Let $X$ be a compact K\"{a}hler threefold. 
Then the moduli space $\mathcal{M}$ of solution to the equations \eqref{keq1}
 and \eqref{keq2} is compact.  
\end{Corollary}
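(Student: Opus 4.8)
The plan is to deduce the compactness of $\mathcal{M} = \mathcal{M}(1)$ directly from Proposition \ref{prop:comp}, via the explicit rescaling bijection already indicated in the text. First I would make the rescaling map precise: for a fixed $s > 0$, define
\begin{equation*}
\Phi_s : (A, u, (\alpha, \beta)) \longmapsto (A, \sqrt{s}\, u, (\sqrt{s}\, \alpha, \sqrt{s}\, \beta)).
\end{equation*}
I would then verify that $\Phi_s$ carries solutions of \eqref{rkeq1}--\eqref{rkeq2} to solutions of \eqref{keq1}--\eqref{keq2}. The conditions $\bar{\partial}_{A} \alpha = \bar{\partial}_{A} \beta = \bar{\partial}_{A}^{*} \beta = 0$ and $F_{A'}^{0,2} = \bar{\partial}^{*} u = 0$ are linear in the matter fields $(u, \alpha, \beta)$ and leave $A$ fixed, hence are preserved; the pointwise quadratic constraints $\alpha u = 0$ and $\bar{\alpha} \beta = 0$ are merely multiplied by $s$ and so still vanish. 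For the moment-map equation, a solution of \eqref{rkeq2} satisfies $i \Lambda F_{A'}^{1,1} = \frac{s}{8}(|u|^2 + |\beta|^2 - |\alpha|^2)$, and after rescaling the right-hand side becomes $\frac{1}{8}(|\sqrt{s}\, u|^2 + |\sqrt{s}\, \beta|^2 - |\sqrt{s}\, \alpha|^2)$, which is exactly the $s=1$ equation in \eqref{keq2}. Since $\Phi_{1/s}$ gives a two-sided inverse, $\Phi_s$ is a bijection from the solution set of \eqref{rkeq1}--\eqref{rkeq2} onto that of \eqref{keq1}--\eqref{keq2}.

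Next I would check that $\Phi_s$ descends to the quotients. The gauge group $\mathcal{G}$ acts by $\alpha \mapsto g\alpha$, $\beta \mapsto g\beta$, $u \mapsto u$ and $A' \mapsto A' - \sigma^{-1} dg$; since multiplication by the scalar $\sqrt{s}$ commutes with multiplication by $g$ and leaves the connection untouched, $\Phi_s$ is $\mathcal{G}$-equivariant. It therefore induces a well-defined map $\bar{\Phi}_s : \mathcal{M}(s) \to \mathcal{M}(1) = \mathcal{M}$, with inverse induced by $\Phi_{1/s}$. Both $\Phi_s$ and $\Phi_{1/s}$ are continuous in the relevant Sobolev topologies, being linear on the matter fields and the identity on connections, so $\bar{\Phi}_s$ is a homeomorphism and not merely a set-theoretic bijection.

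To conclude, I would fix $s$ large enough that Proposition \ref{prop:comp} applies, so that $\mathcal{M}(s)$ is compact, and then observe that a homeomorphism carries compact spaces to compact spaces, whence $\mathcal{M} = \bar{\Phi}_s(\mathcal{M}(s))$ is compact. I do not anticipate any serious difficulty at this stage: the entire analytic content of the compactness result is concentrated in Proposition \ref{prop:comp}, where one must control solutions as $s \to \infty$, whereas the Corollary is a formal consequence. The only points demanding genuine, if routine, care are the $\mathcal{G}$-equivariance of $\Phi_s$ and the verification that the rescaling is a homeomorphism of moduli spaces.
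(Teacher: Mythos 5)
Your proof is correct and is essentially the paper's own argument: the paper deduces the Corollary from Proposition \ref{prop:comp} via exactly this rescaling bijection $(A,u,(\alpha,\beta)) \mapsto (A,\sqrt{s}\,u,\sqrt{s}\,(\alpha,\beta))$ between $\mathcal{M}(s)$ and $\mathcal{M}=\mathcal{M}(1)$, stated there in a single line. You have merely made explicit the equivariance and continuity checks that the paper leaves implicit.
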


We prove Proposition \ref{prop:comp} in three steps (I), (II) and (III) below. 
In the rest of this subsection, constants denoted by $C$ or $D$ do not
depend on the parameter $s$.

\paragraph{(I) Bounds on $|\alpha|$, $|\beta|$ and $|u|$.}
We prove $L^{\infty}$-bounds on $\alpha , \beta$ and $u$.

Firstly, we consider the case $\deg \xi < 0$ case. 
In this case, from Proposition \ref{prop:dc}, 
we have $\beta \equiv 0$ and $u \equiv 0$.  
We also have the
following point-wise estimate on $\alpha$.

\begin{lemma}
Let $X$ be a compact K\"{a}hler threefold, 
and let $\xi$ be the
 characteristic line bundle of a $Spin^c$-structure on $X$. 
Let $(A,u, (\alpha ,\beta))$ be a solution to the equations \eqref{rkeq1}
 and \eqref{rkeq2}. Then, 
if $\deg \xi <0$ then $ | \alpha | \leq C / \sqrt{s}$, where $C$ is a positive
 constant which depends upon the curvature of the canonical connection
 on $K_{X}^{-1}$. 
\label{lem:esta}
\end{lemma}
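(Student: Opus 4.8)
The plan is to prove the bound by a maximum-principle (Bochner) argument applied to $w := |\alpha|^2$ on the compact manifold $X$, in the spirit of Taubes' pointwise estimates for the symplectic Seiberg--Witten equations. First I would reduce the system: since $\deg \xi < 0$, Proposition \ref{prop:dc} gives $\beta \equiv 0$ and $u \equiv 0$, so a solution of \eqref{rkeq1} and \eqref{rkeq2} is simply a holomorphic section $\alpha$ of $L$ (that is, $\bar{\partial}_{A}\alpha = 0$) together with a connection satisfying the single curvature identity $i\Lambda F_{A'}^{1,1} = -\frac{s}{8}|\alpha|^2$. By Proposition \ref{prop:psm} I may assume $(A,\alpha)$ is smooth after a gauge transformation, so $w$ is a smooth nonnegative function; if $\alpha \equiv 0$ the bound is trivial, so I assume $\alpha \not\equiv 0$.

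Next I would derive a Weitzenb\"{o}ck formula for $w$. Writing $\nabla_A$ for the induced connection on $L$ and using the Bochner--Kodaira--Nakano identity on the K\"{a}hler manifold $X$ together with $\bar{\partial}_{A}\alpha = 0$, one obtains $\nabla_A^{*}\nabla_A \alpha = c\,(i\Lambda F_A)\,\alpha$ for a positive constant $c$ fixed by the chosen normalisations, and hence
\begin{equation*}
\Delta w = 2|\nabla_A \alpha|^2 - 2c\,(i\Lambda F_A)\,w ,
\end{equation*}
where $\Delta$ is the negative-definite Laplacian on functions. I would then feed in the equation: from $A' = A_c + 2A$ we have $i\Lambda F_{A'} = i\Lambda F_{A_c} + 2\,i\Lambda F_A$, so the curvature identity gives $i\Lambda F_A = -\frac{s}{16}w - \frac{1}{2}\,i\Lambda F_{A_c}$, and substituting yields
\begin{equation*}
\Delta w = 2|\nabla_A \alpha|^2 + \frac{cs}{8}\,w^2 + c\,(i\Lambda F_{A_c})\,w .
\end{equation*}
The crucial structural point is that the hypothesis $\deg\xi<0$ forces the sign $-|\alpha|^2$ in \eqref{rkeq2}, which is precisely what produces the coercive quartic term $\frac{cs}{8}w^2$ with the favourable (positive) sign.

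Finally I would invoke the maximum principle. Since $X$ is compact, $w$ attains its maximum at some $x_0$, where $\Delta w(x_0) \le 0$, and $w(x_0) > 0$ by assumption. Discarding the nonnegative term $2|\nabla_A\alpha|^2$ and dividing by $c\,w(x_0) > 0$ gives
\begin{equation*}
\frac{s}{8}\,w(x_0) \le -\,(i\Lambda F_{A_c})(x_0) \le \sup_X |i\Lambda F_{A_c}| .
\end{equation*}
Hence $w \le w(x_0) \le (8/s)\sup_X|i\Lambda F_{A_c}|$ everywhere, i.e.\ $|\alpha| \le C/\sqrt{s}$ with $C := \sqrt{8\,\sup_X|i\Lambda F_{A_c}|}$, which depends only on the curvature of the canonical connection $A_c$ on $K_X^{-1}$, as claimed.

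The one genuinely delicate point is the derivation of the Weitzenb\"{o}ck identity and the bookkeeping of signs in the K\"{a}hler identities; everything after that is a routine maximum-principle estimate. I note in particular that the value of the normalising constant $c$ is immaterial, since it cancels upon dividing by $c\,w(x_0)$, so the final constant $C$ indeed depends only on the fixed canonical connection $A_c$.
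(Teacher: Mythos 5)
Your proposal is correct and follows essentially the same route as the paper's proof: reduce to $\beta \equiv 0$, $u \equiv 0$ via Proposition \ref{prop:dc}, apply the K\"{a}hler--Weitzenb\"{o}ck identity to the holomorphic section $\alpha$ to get $\nabla_{A}^{*}\nabla_{A}\alpha = (i\Lambda F_{A})\alpha$, substitute the curvature equation (whose sign, forced by $\deg \xi < 0$, yields the coercive quartic term), and finish with the maximum principle at a maximum of $|\alpha|^2$. The only differences are cosmetic (your negative-definite Laplacian versus the paper's positive one, and your abstract normalising constant $c$, which the paper fixes as $c=1$), and your sign bookkeeping is right.
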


\begin{proof}
For $\alpha \in \Omega^{0,0} (X, L)$, we have the following from the
 K\"{a}hler identity (see e.g. \cite[\S~2.2]{K} or \cite[\S~6.1.3]{DK}). 
\begin{equation}
 \bar{\partial}_{A}^{*} \bar{\partial}_{A} \alpha 
 = \frac{1}{2} \nabla_{A}^{*} \nabla_{A} \alpha 
 - \frac{1}{2} i ( \Lambda F_{A}) \alpha . 
\label{eq:weita}
\end{equation}
Then, using the equations \eqref{keq1} and \eqref{keq2}, we obtain  
$$ 
0 = \frac{1}{2} \nabla_{A}^{*} \nabla_{A} \alpha 
 - \frac{1}{2} \left( - \frac{1}{2} F_{K_{X}^{-1}} - \frac{s}{8} | \alpha
 |^2 \right) \alpha .$$

On the other hand, we have $\frac{1}{2} \Delta | \alpha |^2 = 
\langle \nabla_{A}^{*} \nabla_{A} \alpha , \alpha \rangle - | \nabla_{A}
 \alpha |^2$, where $\Delta$ is the Laplacian on functions. 
Thus, we get 
$$ \frac{1}{2} 
 \Delta | \alpha |^2 + | \nabla_{A} \alpha |^2 
= - \frac{1}{2} F_{K_{X}^{-1}} |\alpha|^2 
 - \frac{s}{8} |\alpha|^4 .$$
From the maximum principle, if $x_0$ is a local maximum of $| \alpha |^2
 (x)$, as the Laplacian is non-negative at a maximum, we obtain 
$$ - \frac{1}{2} F_{K_{X}^{-1}} (x_0) |\alpha|^2 (x_0)  
 - \frac{s}{8} |\alpha|^4 (x_0) \geq 0 . $$
Thus, we get either $|\alpha |^2 (x_0) = 0$ or $|\alpha|^2 (x_0) \leq -
 4 F_{K_{X}^{-1}} (x_0) /s $. 
Hence the assertion holds. 
\end{proof}

If $\deg \xi >0$, from Proposition \ref{prop:dc}, 
we have $\alpha \equiv 0$. 
In this case, 
similar to Lemma \ref{lem:esta}, we have the following point-wise
estimate on $\beta$.

\begin{lemma}
Let $X$ be a compact K\"{a}hler threefold, 
and let $\xi$ be the
 characteristic line bundle of a $Spin^c$-structure on $X$. 
Let $(A,u, (\alpha ,\beta))$ be a solution to the equations \eqref{rkeq1}
 and \eqref{rkeq2}. Then, 
if $\deg \xi > 0$ then $ | \beta | \leq C / \sqrt{s} $, 
where $C$ is a positive constant which depends upon the curvature of 
the canonical connection on $K_{X}^{-1}$ and of a 
 connection on $\Lambda^{0,2}$. 
In addition, there exists a constant $D >0$, which depends on the metric
 of the underlying manifold $X$ and $\deg \xi$ such that 
$|u | \leq D /s$. 
\label{lem:estbu}
\end{lemma}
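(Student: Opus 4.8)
The plan is to adapt the Bochner--Weitzenböck argument of Lemma~\ref{lem:esta} to the form $\beta \in \Omega^{0,2}(X,L)$, and then treat $u$ separately by exploiting the constraint $\alpha u = 0$. Since $\deg\xi > 0$ forces $\alpha \equiv 0$ by Proposition~\ref{prop:dc}, the constraint $\alpha u = 0$ gives no control on $u$ directly, so $u$ will have to be bounded via its own elliptic equation rather than pointwise algebra. First I would write down the appropriate Kähler identity for a $(0,2)$-form valued in $L$: there is a Weitzenböck formula of the shape
\begin{equation*}
\bar{\partial}_{A}^{*}\bar{\partial}_{A}\beta + \bar{\partial}_{A}\bar{\partial}_{A}^{*}\beta
= \frac{1}{2}\nabla_{A}^{*}\nabla_{A}\beta + \mathcal{R}(\beta),
\end{equation*}
where $\mathcal{R}$ is a zeroth-order curvature term built from $\Lambda F_{A}$, the curvature of the canonical connection on $K_{X}^{-1}$, and the curvature of the induced connection on $\Lambda^{0,2}$. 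Because $\bar{\partial}_{A}\beta = \bar{\partial}_{A}^{*}\beta = 0$ from \eqref{rkeq1}--\eqref{rkeq2}, the left side vanishes.

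Next I would substitute the curvature equation $i\Lambda F_{A'}^{1,1} = \frac{s}{8}(|u|^2 + |\beta|^2 - |\alpha|^2)$, with $\alpha \equiv 0$, into $\mathcal{R}(\beta)$ to isolate the dominant term $-\tfrac{s}{8}|\beta|^2\,\beta$ plus bounded curvature contributions. Pairing with $\beta$ and using $\tfrac12\Delta|\beta|^2 = \langle \nabla_A^*\nabla_A\beta,\beta\rangle - |\nabla_A\beta|^2$, I obtain a differential inequality of the form $\tfrac12\Delta|\beta|^2 + |\nabla_A\beta|^2 = (\text{bounded curvature})|\beta|^2 - \tfrac{s}{8}|\beta|^2(|u|^2+|\beta|^2)$. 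At a maximum of $|\beta|^2$, the Laplacian term has a favourable sign, and dropping the manifestly nonnegative $\tfrac{s}{8}|u|^2|\beta|^2$ term yields $|\beta|^2 \le C/s$ at the maximum, hence everywhere, giving $|\beta| \le C/\sqrt{s}$ exactly as claimed.

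For the bound on $u$, the plan is to use the first equation in \eqref{meq2}, which on the Kähler threefold reads $\bar{\partial}^{*}u = F_{A'}^{0,2} - \tfrac14\bar{\alpha}\beta$ before reduction; with $\alpha\equiv 0$ and $F_{A'}^{0,2}=0$ from \eqref{rkeq2}, one sees $u$ is a harmonic $(0,3)$-form for the relevant operator, but the genuine control should come from integrating the curvature identity \eqref{eq:degeq}. Specifically, $\tfrac{s}{16\pi}\int_X(|u|^2+|\beta|^2)\,vol = \deg\xi$ bounds $\|u\|_{L^2}^2 \le 16\pi\,\deg\xi/s$; combined with elliptic regularity for the $\bar{\partial},\bar{\partial}^{*}$ system satisfied by $u$ and a Moser iteration or Sobolev bootstrap, this upgrades to the pointwise estimate $|u|\le D/s$, with the extra power of $s$ coming from the $L^2$-bound scaling as $1/s$ rather than $1/\sqrt{s}$.

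The main obstacle I anticipate is the bound on $u$: unlike $\alpha$ and $\beta$, the section $u$ does not satisfy a clean pointwise maximum-principle inequality with a self-improving cubic term, so the $1/s$ decay must be extracted from the global $L^2$-identity \eqref{eq:degeq} and then promoted to $L^\infty$ by elliptic estimates, and one must check carefully that all constants in the Sobolev/Moser step are $s$-independent. A secondary subtlety is bookkeeping the various curvature contributions in $\mathcal{R}(\beta)$ so that the zeroth-order constant $C$ genuinely depends only on the fixed geometry (the canonical connection on $K_X^{-1}$ and on $\Lambda^{0,2}$) and not on the solution; this requires that the only solution-dependent piece of $\Lambda F_{A}$ be precisely the favourable $-\tfrac{s}{8}|\beta|^2$ term, which the curvature equation guarantees once $\alpha\equiv 0$.
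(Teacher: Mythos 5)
Your bound on $\beta$ is correct and coincides with the paper's own proof: the K\"{a}hler--Weitzenb\"{o}ck identity on $\Omega^{0,2}(X,L)$, with $\dbar_A\beta = \dbar_A^*\beta = 0$ killing the left-hand side, the curvature equation in \eqref{rkeq2} (with $\alpha \equiv 0$ from Proposition \ref{prop:dc}) supplying the favourable term $-\tfrac{s}{8}(|u|^2+|\beta|^2)\beta$, and the maximum principle applied at a maximum of $|\beta|^2$; the remaining zeroth-order terms involve only the fixed curvatures of $K_X^{-1}$ and $\Lambda^{0,2}$, exactly as you say.

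The gap is in the $u$-estimate, precisely at the point you yourself flagged as the main obstacle. From \eqref{eq:degeq} with $\alpha\equiv 0$ you get $\int_X |u|^2 \le 16\pi \deg\xi / s$, i.e.\ $\|u\|_{L^2} \le C/\sqrt{s}$. But every upgrade you propose is linear in $u$: elliptic regularity for the $\dbar,\dbar^*$ system, Moser iteration for the subsolution $|u|^2$ (which gives $\sup_X|u|^2 \le C\int_X|u|^2$), or equivalently the observation that $u$ is a harmonic $(0,3)$-form ($\dbar u=0$ automatically in top antiholomorphic degree, $\dbar^*u=0$ from \eqref{rkeq2}) and hence lies in a fixed finite-dimensional space on which all norms are equivalent. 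Any such estimate yields $\sup_X|u| \le C\|u\|_{L^2} \le D/\sqrt{s}$, not $D/s$; your sentence attributing the extra power of $s$ to ``the $L^2$-bound scaling as $1/s$'' conflates $\|u\|_{L^2}^2 \sim 1/s$ with $\|u\|_{L^2}\sim 1/s$. The paper reaches the stated power by a different, quadratic step: from the Weitzenb\"{o}ck formula it writes $\langle \nabla\nabla^* u , u\rangle \le C|u|^2$ and then invokes a maximum principle in the form $\sup_X|u| \le C\int_X |u|^2$, which together with $s\int_X|u|^2 \le 16\pi\deg\xi$ gives $|u|\le D/s$. Note that this inequality is not scale-invariant in $u$ (the standard subsolution estimate only provides $\sup_X|u|^2 \le C\int_X|u|^2$), so your route in fact recovers what linear theory honestly gives, namely $|u| \le D/\sqrt{s}$ --- which is all that the subsequent curvature estimate, Proposition \ref{lem:bfa}, actually uses, since there only $s|u|^2 \le C$ is needed --- but as written it does not prove the lemma's claim $|u|\le D/s$.
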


\begin{proof}
For $\beta \in \Omega^{0,2} (X, L)$, 
we have the following again from the
 K\"{a}hler identity. 
\begin{equation}
\bar{\partial}_{A}^{*} \bar{\partial}_{A} \beta 
 +   \bar{\partial}_{A} \bar{\partial}_{A}^{*} \beta  
 = \frac{1}{2} \tilde{\nabla}_{A}^{*} \tilde{\nabla}_{A} \beta  
 +  \frac{1}{2} i ( \Lambda (  F' + F_{A} ) ) \beta , 
\label{eq:weitb}
\end{equation}
where $\tilde{\nabla}_{A}$ is the unitary connection of $\Lambda^{0,2}
 \otimes L$, and $F' + F_{A} $ is the curvature of $\tilde{\nabla}_{A}$. 
Then, using the equations \eqref{rkeq1} and \eqref{rkeq2}, we obtain 
$$  
 0 = \frac{1}{2} \tilde{\nabla}_{A}^{*} \tilde{\nabla}_{A} \beta  
 +  \frac{1}{2} \left( i(\Lambda F') - \frac{1}{2} F_{K_{X}^{-1}} + \frac{s}{8}
 ( |u|^2 + |\beta|^2 ) \right) \beta ,$$
Thus, again using the maximum principle, if $x_1$ is a local maximum of
 $| \beta |^2 (x)$, then we get 
$$  -  \frac{1}{2} \left( i(\Lambda F') - \frac{1}{2} F_{K_{X}^{-1}}
 \right) | \beta |^2 (x_1) - \frac{s}{16} \left( 
  |u|^2 (x_1) + |\beta|^2 (x_1)  \right) | \beta |^2 (x_1) \geq 0 .  $$
Thus, either $| \beta |^2 (x_1) = 0$ or 
$| \beta |^2 (x_1) + | u |^2 (x_1) \leq  \left(  - 4 i \Lambda F' 
+ 4 F_{K_{X}^{-1}} \right) / s $. 
Hence we get $|\beta| \leq C / \sqrt{s} $ for a constant $C >0$.

We next prove the bound on $u$. 
From the Weitzenb\"{o}ck formula, $u$ satisfies 
$ \langle \nabla \nabla^{*} u , u \rangle \leq  C |u|^2 $, where 
$C >0$ is a constant which depends on the metric of the underlying
manifold $X$. 
Hence, from a maximum principle, we get 
$ | u | \leq C \int_{X} |u|^2 $. 
On the other hand, from \eqref{eq:degeq}, $s \int_{X} |u|^2$ is bounded
 by $\deg \xi$ in this case. 
Thus the assertion holds. 
\end{proof}

\paragraph{(II) Estimate on the curvature.}

We next prove the following estimate on the curvature.

\begin{proposition}
Let $(A, u, (\alpha , \beta))$ a solution to the equations \eqref{rkeq1}
 and \eqref{rkeq2}. Then 
there exists a constant $C > 0$ such that 
$|F_{A}| \leq C$ if $s$ is large enough. 
\label{lem:bfa}
\end{proposition}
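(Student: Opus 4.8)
The plan is to bound $F_{A'}$ in place of $F_A$: since $A' = A_c + 2A$ with $A_c$ a fixed smooth connection, $F_{A'} = F_{A_c} + 2F_A$ and $F_{A_c}$ is a fixed bounded form, so a uniform bound on $|F_{A'}|$ is equivalent to one on $|F_A|$. First I would use \eqref{rkeq2} to pin down the type of $F_{A'}$: the condition $F_{A'}^{0,2} = 0$, together with $F_{A'}$ being imaginary-valued (so that $F_{A'}^{2,0} = -\overline{F_{A'}^{0,2}} = 0$), shows $F_{A'}$ is of type $(1,1)$. I then split it as $F_{A'}^{1,1} = \tfrac13(\Lambda F_{A'})\,\omega + F_{A'}^{\mathrm{prim}}$. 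The trace part is controlled at once: the last equation in \eqref{rkeq2} gives $\Lambda F_{A'} = -\tfrac{is}{8}(|u|^2 + |\beta|^2 - |\alpha|^2)$, and the Step (I) estimates (Lemma \ref{lem:esta} and Lemma \ref{lem:estbu}) show that $s|\alpha|^2$, $s|\beta|^2$ and $s|u|^2$ are all bounded independently of $s$. Hence $|\Lambda F_{A'}| \le C$ uniformly in $s$.

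The heart of the matter is the primitive part, and here I would exploit that $F_{A'}$ is $d$-closed (Bianchi identity) and of type $(1,1)$. The Kähler identities give $\dbar^{*} F_{A'} = i\,\partial(\Lambda F_{A'})$ and $\partial^{*} F_{A'} = -i\,\dbar(\Lambda F_{A'})$, so $d^{*}F_{A'} = i(\partial - \dbar)(\Lambda F_{A'})$ and, using $dF_{A'} = 0$,
\begin{equation*}
\Delta_{H} F_{A'} = d\,d^{*} F_{A'} = -2i\,\partial\dbar(\Lambda F_{A'}).
\end{equation*}
Applying the Green operator $G$ and the harmonic projection $\mathcal{H}$ of the Hodge decomposition yields $F_{A'} = \mathcal{H}(F_{A'}) - 2i\,G\,\partial\dbar(\Lambda F_{A'})$, where $\mathcal{H}(F_{A'})$ is the harmonic representative of the fixed cohomology class of $F_{A'}$ (independent of $s$ and bounded). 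Since $G\,\partial\dbar$ is a zeroth-order Calderón--Zygmund operator, it is bounded on $L^{p}$ for every $1 < p < \infty$, and therefore $\|F_{A'}\|_{L^{p}} \le C_{p}\bigl(1 + \|\Lambda F_{A'}\|_{L^{p}}\bigr) \le C_{p}$ uniformly in $s$, for every finite $p$.

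The main obstacle is upgrading these uniform $L^{p}$ bounds to the stated pointwise bound $|F_{A'}| \le C$, since $G\,\partial\dbar$ does not map $L^{\infty}$ into $L^{\infty}$, so a mere $C^{0}$ bound on $\Lambda F_{A'}$ does not suffice. To overcome this I would improve $\Lambda F_{A'}$ to a uniform Hölder bound. For instance when $\deg\xi < 0$, where $\beta \equiv u \equiv 0$ by Proposition \ref{prop:dc} and $\Lambda F_{A'}$ is a fixed form plus a multiple of $s|\alpha|^2$, I would apply a Bochner estimate to the equation $\nabla_{A}^{*}\nabla_{A}\alpha = \bigl(\tfrac12 F_{K_{X}^{-1}} + \tfrac{s}{8}|\alpha|^2\bigr)\alpha$ from Lemma \ref{lem:esta}: pairing with $\alpha$, integrating, and feeding in the bounds on $s|\alpha|^2$ and on $\|F_{A}\|_{L^p}$ gives a uniform bound on $\sqrt{s}\,\nabla_{A}\alpha$ in $L^{p}$, hence a uniform $W^{1,p}\hookrightarrow C^{0,\gamma}$ bound on $s|\alpha|^2$, i.e. on $\Lambda F_{A'}$, for $p > 6$. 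With $\Lambda F_{A'} \in C^{0,\gamma}$ uniformly, Schauder theory applied to the equation above for $F_{A'}$ (equivalently, writing $F_{A'} = \mathcal{H}(F_{A'}) + i\,\partial\dbar f$ via the $\partial\dbar$-lemma and estimating $\|f\|_{C^{2,\gamma}}$ from $\|\Lambda F_{A'}\|_{C^{0,\gamma}}$) produces a uniform $C^{0,\gamma}$, in particular $L^{\infty}$, bound on $F_{A'}$. The case $\deg\xi > 0$ is handled identically using the bounds on $\beta$ and $u$ from Lemma \ref{lem:estbu}. The delicate point throughout is to carry every constant through this bootstrap independently of $s$, in particular to keep the nonlinearity $\tfrac{s}{8}|\alpha|^2\alpha$ controlled using only $s|\alpha|^2 \le C$.
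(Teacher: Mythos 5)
Your route is genuinely different from the paper's (which runs a Taubes/Kanda-style maximum-principle argument on $F_{A'}^{\perp}$ via the Weitzenb\"{o}ck formula), and its first half is sound: on a K\"{a}hler threefold $F_{A'}^{0,2}=0$ does force $F_{A'}$ to be of type $(1,1)$, the trace part is uniformly bounded by the Step (I) estimates, the harmonic part is the fixed representative of $-2\pi i c_{1}(\xi)$, and $G\partial\dbar$ is indeed bounded on $L^{p}$, so the uniform $L^{p}$ bounds on $F_{A'}$ for all finite $p$ are correct. You also correctly identify the obstruction to finishing: $G\partial\dbar$ is not bounded on $L^{\infty}$, so everything hinges on a uniform H\"{o}lder bound for $\Lambda F_{A'}$, i.e. for $s|\alpha|^{2}$ (resp. $s(|\beta|^{2}+|u|^{2})$).

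The gap is precisely at that hinge. You claim that pairing $\nabla_{A}^{*}\nabla_{A}\alpha = c\,\alpha$ (with $|c|\le C$) against $\alpha$ and integrating gives a uniform $L^{p}$ bound on $\sqrt{s}\,\nabla_{A}\alpha$ for $p>6$; it gives only $\|\sqrt{s}\,\nabla_{A}\alpha\|_{L^{2}}\le C$. With just $L^{2}$ control of the gradient, $\Delta(s|\alpha|^{2})$ is merely bounded above and bounded in $L^{1}$, which in dimension $6$ is nowhere near the $L^{p}$, $p>3$, control of $\Delta(s|\alpha|^{2})$ needed for a uniform $C^{0,\gamma}$ bound. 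Upgrading $L^{2}$ to $L^{p}$, $p>6$, requires elliptic estimates for $\nabla_{A}^{*}\nabla_{A}$ (or $\dbar_{A}$) with constants independent of $s$, and the coefficients of these operators involve the varying connection $A$ itself, not only $F_{A}$; so one must first place $A$ in a controlled gauge --- e.g. local Uhlenbeck--Coulomb gauges on balls of fixed radius, where $\|A\|_{W^{1,p}}$ is bounded by the already-established $\|F_{A}\|_{L^{p}}$, and then bootstrap $\dbar_{A}\alpha=0$ together with the Weitzenb\"{o}ck equation. That gauge-fixing idea is absent from your proposal, and without it the Schauder endgame has no input. It is worth noting that the paper's proof is organised exactly to dodge this issue: the dangerous terms $\tfrac{s}{4}|\nabla_{A}\alpha|^{2}$, $\tfrac{s}{4}|\tilde{\nabla}_{A}\beta|^{2}$, $\tfrac{s}{4}|\nabla u|^{2}$ appearing in the Weitzenb\"{o}ck formula for $F_{A'}^{\perp}$ are absorbed by adding the comparison function $\phi_{0}=-\tfrac{s}{4\sqrt{2}}(|\alpha|^{2}+|\beta|^{2}+|u|^{2})$, whose Laplacian reproduces those gradient terms, after which everything is closed by maximum principles (including the Gilbarg--Trudinger local estimate), with no $s$-uniform elliptic theory for varying connections required.
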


This type of estimate was obtained by Taubes \cite[Prop.~2.4]{Ta}
in the case of the Seiberg--Witten equations on compact symplectic
four-manifolds 
(see also Kanda \cite[Prop.~6.1]{Ka}). 
Our proof goes in a similar way to those in \cite{Ta} and \cite{Ka}
except that we have the extra terms coming from $u$ in the equations.

From Proposition \ref{prop:dc}, either $\alpha$ vanishes 
or $\beta$ and $u$ do if $\deg \xi \neq 0$. 
But, in the following, 
in order to avoid the proof becoming longer, we deal with the two cases
simultaneously, namely, 
$\alpha, \beta$ and $u$ appear at the same time.

\vspace{0.2cm}

\hspace{-0.75cm}
{\it Proof of Proposition \ref{lem:bfa}.} 
We write $F_{A'}^{+} := F_{A'}^{2,0} + F_{A'}^{0} + F_{A'}^{0,2}$, where
$F_{A'}^{0}$ is the $\omega$-component of the curvature, 
and denote by $F_{A'}^{\perp}$ the orthogonal component of $F_{A'}$ to
$F_{A'}^{+}$ so that $F_{A'} = F_{A'}^{+} + F_{A'}^{\perp}$. 
An $L^{\infty}$-bound for $F_{A'}^{+}$ comes from the equation \eqref{rkeq2}
and the $L^{\infty}$-bounds on $\alpha , \beta$ and $u$ in Lemmas
\ref{lem:esta} and \ref{lem:estbu}. 
We thus prove an $L^{\infty}$-bound for $F_{A'}^{\perp}$ below.

From the Bianchi identity, we have 
$$ d F_{A'}^{+} + d F_{A'}^{\perp} = 0 .$$
Thus, by the Weitzenb\"{o}ck formula, we obtain 
\begin{equation}
\frac{1}{2} \nabla^{*} \nabla F_{A'}^{\perp} 
 + R F_{A'}^{\perp} 
= - P^{\perp} d^* d F_{A'}^{+}, 
\label{wb}
\end{equation}
where $P^{\perp}$ is the orthogonal projection to the compliment of the self-dual part, 
and $R \in \text{Hom} (\Lambda^{\perp} , \Lambda^{\perp})$ depends on
 the metric of the underlying manifold. 
From \eqref{rkeq1} and \eqref{rkeq2}, the right hand side of \eqref{wb}
 becomes the following. 
\begin{equation*}
\begin{split}
P^{\perp} 
d^{*} d \left\{ - \frac{i s}{8} 
\left( |\alpha|^2 - |\beta|^2 \right) \right\} 
&= - \frac{is}{8} P^{\perp} \left( \partial^{*} \partial + 
\bar{\partial}^{*} \bar{\partial} \right) 
\left( | \alpha |^2 - | \beta |^2  - |u|^2 \right) \omega \\ 
&= - \frac{s}{4} P^{\perp} \left( 
 \bar{\partial} \partial | \alpha |^2 + \partial \bar{\partial} |\beta|^2 
+ \partial \bar{\partial} |u|^2 |  \right). 
\end{split}
\end{equation*}
Hence \eqref{wb} becomes 
\begin{equation*}
\begin{split}
\frac{1}{2} \nabla^{*} &\nabla F_{A'}^{\perp} 
 + R F_{A'}^{\perp}   \\
&= - \frac{s}{4} \langle F_{A}^{\perp} \alpha , \alpha \rangle_{L} 
 - \frac{s}{4} \langle \left( F_{A}^{\perp} + F_{\Lambda^{0,2}} \right)
 \beta 
, \beta \rangle_{\Lambda^{0,2} \otimes L}  
 - \frac{s}{4} \langle 
F_{K_{X}^{-1}} u , u \rangle_{K_{X}^{-1}} \\
& \qquad  + \frac{s}{4} 
P^{\perp} 
\left\{ \langle d_{A} \alpha , d_{A} \alpha \rangle_{L} 
 + \langle d_{\tilde{\nabla}_{A}} \beta , d_{\tilde{\nabla}_{A}} \beta
 \rangle_{\Lambda^{0,2} \otimes L} 
 +  \langle d_{\nabla} u , d_{\nabla} u 
 \rangle_{K_{X}^{-1}} \right\}. 
\end{split}
\end{equation*}
Taking the inner product of the above with $F_{A}^{\perp}$, and using 
$( \Delta |F| ) |F| \leq \langle \nabla^{*} \nabla F , F \rangle$, 
we obtain the following. 
\begin{equation*}
\begin{split}
&\left( 
\frac{1}{2} \Delta 
+ \frac{s}{4} ( | \alpha |^2 + |\beta|^2 + 1)  
\right) |F_{A'}^{\perp}|  \\
&  \qquad  \leq \left( |R| + \frac{s}{4} \right) |F_{A'}^{\perp}| + \frac{s}{4} |\beta|^2 |F_{\Lambda^{0,2}}
 | + \frac{s}{4} |u|^2 |F_{K_{X}^{-1}}|\\
& \qquad \qquad + \frac{s}{4} 
\left| P^{\perp} 
\left\{ \langle d_{A} \alpha , d_{A} \alpha \rangle_{L} 
 + \langle d_{\tilde{\nabla}_{A}} \beta , d_{\tilde{\nabla}_{A}} \beta
 \rangle_{\Lambda^{0,2}} 
+  \langle d_{\nabla} u , d_{\nabla} u \rangle 
 \rangle_{K_{X}^{-1}}
\right\}\right| \\ 
&  \qquad \leq \left( |R| + \frac{s}{4} \right) |F_{A}^{\perp}| + C s |\beta|^2 + C s |u|^2 
 + \frac{s}{4 \sqrt{2}} 
\left( | \nabla_{A} \alpha |^2 + | \tilde{\nabla}_{A} \beta |^2  + |
 \nabla u|^2 \right) . \\ 
\end{split}
\end{equation*}
Putting $\phi_{0} := \frac{s}{4 \sqrt{2}} \left( |\alpha|^2 + |\beta|^2
 + |u|^2 + 1 \right)$ and $R' := |R| + \frac{s}{4}$, we get 
\begin{equation} 
\left( \frac{1}{2} \Delta 
+ \frac{s}{4} (  | \alpha |^2 +  |\beta|^2 +1 ) \right) 
\left( |F_{A'}^{\perp} | - \phi_{0} \right) 
\leq R' |F_{A'}^{\perp} | .
\label{eq:phi0}
\end{equation}

We next consider a solution $\phi$ to the following equation. 
\begin{equation}
\left( \frac{1}{2} 
\Delta + \frac{s}{4} 
\left( |\alpha|^2 + |\beta|^2 + 1 \right) \right) \phi 
= R' |F_{A'}^{\perp} |. 
\label{eq:phi}
\end{equation} 
From a maximal principle with \eqref{eq:phi0} and \eqref{eq:phi}, we
 get 
\begin{equation}
|F_{A'}^{\perp}| \leq \phi_{0} + \phi . 
\label{eq:phiphi0}
\end{equation}

We next bound $|| \phi ||_{L^2 (X)}$ by $\sup_{X} \phi$. 
Multiplying $\phi$ to \eqref{eq:phi}, and adding $- \frac{s}{4} 
\left( |\alpha|^2 + |\beta|^2 \right) \phi^2$, we get 
$$  
\frac{1}{4} \Delta  \phi^2 
+ \frac{1}{2} |\nabla \phi |^2 
+ \frac{s}{4} | \phi |^2 
= R' |F_{A'}^{\perp} | \phi 
 - \frac{s}{4} \left( |\alpha|^2 + |\beta|^2  \right) \phi^2 .$$
Then, using H\"{o}lder inequality to each term of the right hand side of
 the above equation, we get 
$$ \frac{1}{4} 
\Delta \phi^2 + \frac{s}{12} 
|\phi|^2 \leq \frac{6}{s} R'|F_{A'}^{\perp} |^2 
+ \frac{3}{16} s \left( |\alpha|^2 + |\beta|^2 \right)^2  \, \left(
 \sup_{X} \phi \right)^2 . 
$$
Hence we obtain  
$$
\int_{X} | \phi |^2 \leq C \left\{ \frac{1}{s^2} 
\int_{X} |F_{A'}^{\perp} |^2 
 + \frac{1}{s} \left( \sup_{X} \phi \right)^2 \int_{X} s \left(
 |\alpha|^2 + |\beta|^2 \right)^2 \right\}. $$ 
Here, we have 
$$ \int_{X} \left\{ |F_{A'}^{+}|^2 - |F_{A'}^{\perp}|^2 \right\} 
= \langle c_{1}^{3} (\xi) , [X] \rangle . $$
Therefore we get 
$$ || \phi ||_{L^2 (X)} 
\leq \frac{C}{s} + \frac{C}{s} \sup_{X} \phi . $$

We next introduce a function $\psi \in L^{2}_{2}$ which satisfies 
\begin{equation}
\left( \frac{1}{2} 
\Delta + \frac{s}{4} \left( |\alpha|^2 + |\beta|^2 +1 \right) \right) \psi 
 = R' |F_{A'}^{\perp} | 
\left( |\alpha|^2 + |\beta|^2 + 1 \right) . 
\label{eq:psi}
\end{equation}
By using a maximum principle for $\psi - \frac{4}{s} R' \sup_X |F_A^{\perp}|$, where $R_{0} := \sup_{X} R'$, 
we get 
\begin{equation} \sup_{X} \psi 
\leq \frac{4}{s} R_0 \sup_{X} |F_{A'}^{\perp} | . 
\label{eq:psib}
\end{equation}
From \eqref{eq:phi} and \eqref{eq:psi}, we have 
$$ 
\left( \frac{1}{2} \Delta 
 + \frac{s}{4} \left( |\alpha|^2 + |\beta|^2 + 1 \right) \right) 
(\phi - \psi ) 
= R' |F_{A'}^{\perp} | \left( |\alpha|^2 + |\beta|^2 \right) . $$

As in \cite{Ka}, we use the following form of the maximum principle by
Gilberg--Trudinger 
(see \cite[Th.~6.6]{Ka}).

\begin{theorem}[\cite{GT}]
Let $D:= - a^{ij} \frac{\partial}{\partial x_{i}}
 \frac{\partial}{\partial x_{j}}
+ b^{j}  \frac{\partial}{\partial x_{j}} +c$ be an elliptic operator
 defined on the unit ball in $\R^{6}$ and satisfy the following
 condition. 
\begin{enumerate}
\item[$(1)$] Let $A$ be a symmetric matrix $(a^{ij})_{ij}$. There exist 
	     positive constants $\lambda_1 \geq \lambda_2$ such that 
$\lambda_1 |\xi|^2 \geq \xi^{t} A \xi \geq \lambda_2 |\xi|^2$ for all
	     $\xi \in \R^6$. In other word, $A$ is uniformly positive
	     definite. 
\item[$(2)$] There exists $\lambda_3 \geq 0$ such that $|b| \leq
	     \lambda_3$ and $c \geq - \lambda_3$.
\end{enumerate}
Then there exists a positive constant $C'$ which depends only on
 $\lambda_1, \lambda_2$ and $\lambda_3$ with the following
 significance. 
If $f \in C^{2} (\overline{B}_{1})$ obeys the differential inequality
 $Df \leq g$ on $B_{1}^{+} \subset B_1$, then it follows that 
$$\sup_{B_{1/2}} f \leq C' \left( || f ||_{L^{2} (B^{+}_{1})} 
 + || g ||_{L^6 (B_{1}^{+})} \right), $$
where $B_{1}^{+}$ denotes the subset $\{ x \in B_1 : f(x) \geq 0\}$. 
\label{th:GT}
\end{theorem}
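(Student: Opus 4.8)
The plan is to read this as the interior (local) maximum principle for non-divergence-form uniformly elliptic operators in the sense of Gilbarg--Trudinger \cite{GT}, and to prove it by combining the Alexandrov--Bakelman--Pucci (ABP) maximum principle with a Krylov--Safonov type measure iteration. The decisive structural clue is the $L^6$-norm on the right-hand side: since the domain sits in $\R^6$, this is precisely the $L^n$-norm with $n=6$ that ABP produces. Recall that ABP bounds a subsolution of $-a^{ij}\partial_i\partial_j + b^j\partial_j$ on a bounded domain in $\R^n$ by its boundary values plus a multiple of $\|g/(\det(a^{ij}))^{1/n}\|_{L^n}$ taken over the upper contact set. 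Hypothesis $(1)$ keeps $(\det(a^{ij}))^{1/n}$ bounded away from $0$ and $\infty$, and hypothesis $(2)$ bounds the drift and the zeroth-order coefficient from below, so the resulting constant depends only on $\lambda_1,\lambda_2,\lambda_3$, as claimed. It is also harmless that $Df\le g$ is assumed only on $B_1^+=\{f\ge 0\}$, since the upper contact set on which ABP is applied lies in $\{f\ge\sup_{\partial B_1}f^+\}\subset B_1^+$.

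The argument then has two layers. First, ABP applied on $B_1$ controls $\sup_{B_1}f$ by the boundary quantity $\sup_{\partial B_1}f^+$ plus $C\|g\|_{L^6(B_1^+)}$; the zeroth-order term is moved to the right, which is legitimate on $B_1^+$ because $c\ge-\lambda_3$ gives $cf\ge-\lambda_3 f^+$, so it only enlarges the effective source by a term of the same form, to be reabsorbed at the next stage. Second, to replace the \emph{boundary} quantity by an \emph{interior} $L^2$-norm I would run the Krylov--Safonov iteration: applying ABP to suitable barriers over a dyadic family of balls yields a growth/critical-density estimate for the distribution function of $f^+$, and iterating it over balls shrinking from $B_1$ to $B_{1/2}$ upgrades the bound to
\[
\sup_{B_{1/2}} f \;\le\; C'\big(\|f\|_{L^2(B_1^+)} + \|g\|_{L^6(B_1^+)}\big),
\]
which is the stated estimate (the Krylov--Safonov machinery in fact delivers $\|f^+\|_{L^p}$ on the right for any $p>0$; here $p=2$ suffices, and the first-order and zeroth-order terms are absorbed into the constant $C'=C'(\lambda_1,\lambda_2,\lambda_3)$).

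The substantive difficulty is precisely the Krylov--Safonov measure estimate, i.e.\ the passage from the ABP/boundary bound to the interior $L^p$ bound, since that is where the theory genuinely exploits only ellipticity and boundedness of the coefficients rather than their regularity. In the geometric situation to which the theorem is applied, however, $a^{ij}$ is the smooth Riemannian metric and $b^j,c$ are built from the connection and curvature, so a more transparent alternative is available: rewrite $Df\le g$ in divergence form and perform Moser iteration, testing against $\eta^2 v^{2\beta-1}$ with $v:=f^++\|g\|_{L^6}$ and $\beta=\chi^m$, where $\chi=\tfrac{3}{2}$ is the Sobolev gain $\tfrac{n}{n-2}$ in dimension $6$; the product of iteration constants converges because $\sum_m \chi^{-m}\log(C\chi^{2m})<\infty$, and letting $m\to\infty$ gives the same supremum bound. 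Either route yields the theorem. Finally, in the application $g=|R|\,|F_{A'}^{\perp}|(|\alpha|^2+|\beta|^2)$ lies in $L^6(X)$ by the pointwise bounds of Lemmas \ref{lem:esta} and \ref{lem:estbu}, so both norms on the right are finite and the constant is independent of the scaling parameter $s$.
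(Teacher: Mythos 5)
The first thing to say is that the paper does not prove this statement at all: it is quoted as a known result, attributed to Gilbarg--Trudinger \cite{GT} in the form used by Kanda \cite[Th.~6.6]{Ka}, and then applied as a black box in the proof of Proposition \ref{lem:bfa}. So there is no proof in the paper to compare yours against; the comparison has to be with the textbook proof. On that score your outline is correct and is in fact the standard argument: the statement is the local (interior) maximum principle for non-divergence-form uniformly elliptic operators (Theorem 9.20 in \cite{GT}), whose proof is exactly the Alexandrov--Bakelman--Pucci estimate --- you correctly identify the $L^6$-norm as the $L^n$-norm, $n=6$, that ABP produces --- combined with a Krylov--Safonov type measure/iteration argument that upgrades the boundary term to an interior $\|f^+\|_{L^p}$-norm for any $p>0$, here $p=2$. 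Your remarks on why the differential inequality need only hold on $B_1^+=\{f\ge 0\}$ (the relevant contact set lies in $\{f\ge\sup_{\partial B_1}f^+\}$) and on absorbing the zeroth-order term via $c\ge-\lambda_3$ are also the right ones; note in passing that the displayed operator $D$ in the statement is missing the $+c$ term that condition (2) presupposes, a typo in the paper worth flagging.

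Two caveats. First, your proposal names the Krylov--Safonov measure estimate as the substantive step but does not carry it out; since that estimate is precisely the content of the cited theorem, your argument is a reduction of the statement to the standard results it comes from rather than an independent proof --- which is reasonable here, as the paper itself treats the statement as citable background. Second, your Moser-iteration alternative does not prove the theorem as stated: rewriting $-a^{ij}\partial_i\partial_j$ in divergence form introduces the extra drift $\partial_i a^{ij}$, so the resulting constant depends on derivatives of the coefficients, whereas the theorem asserts dependence on $\lambda_1,\lambda_2,\lambda_3$ only. You acknowledge this is tailored to the geometric application, where $a^{ij}$ is a fixed smooth metric independent of the scaling parameter $s$, and in that restricted setting it does yield the estimate actually needed for Proposition \ref{lem:bfa}; but as a proof of the quoted theorem it establishes a strictly weaker statement.
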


Let $x \in X$ be the point which attains the maximum of $\phi -\psi$. 
Then from Theorem \ref{th:GT}, we get 
\begin{equation*}
\begin{split} 
\sup_{X} (\phi - \psi) 
&= \sup_{B_{1/2} (x)} (\phi - \psi ) \\
& \leq C \left\{ || \phi ||_{L^2 (X)} 
  + || \psi ||_{L^2 (X)}  \right. \\
& \qquad  + \sup_{X} |F_{A'}^{\perp} | \cdot 
 || \, |\alpha|^2 + |\beta|^2 ||_{L^6 (X)} \} \\ 
& \leq C 
\left\{ \frac{C}{s} + \frac{C}{s} \left( \sup_{X} \phi \right) 
 + \frac{C}{s} \sup_{X} |F_{A'}^{\perp} | \right\} . 
\end{split}
\end{equation*}
On the other hand, from \eqref{eq:psib}, we have 
$$ \sup_{X} \phi - \frac{C}{s} \sup_{X} |F_{A'}^{\perp} | 
 \leq \sup_{X} \left( \phi - \psi \right) . $$
Thus, 
$$ \sup_{X} \phi - \frac{C}{s} \sup_{X} |F_{A'}^{\perp} | 
 \leq
 C 
\left\{ \frac{C}{s} + \frac{C}{s} \left( \sup_{X} \phi \right) 
 + \frac{C}{s} \sup_{X} |F_{A'}^{\perp} | \right\} . $$
So, 
$$ \left( 1 - \frac{C}{s} \right) \sup_{X} \phi
\leq \frac{C}{s} + \frac{C}{s} \sup_{X} |F_{A'}^{\perp}
$$
Hence, if $s$ is large enough, we get 
$$ \sup_{X} \phi \leq C \left( 
\frac{1}{s} + \frac{1}{s} \sup_{X} |F_{A'}^{\perp} | \right) .$$
Therefore, from \eqref{eq:phiphi0}, we get 
$$ \sup_{X} |F_{A'}^{\perp}| \leq C ,$$
if $s$ is large enough. 
Thus the assertion holds. 
\qed

\paragraph{(III) Proof of Proposition \ref{prop:comp}.}

Once the $L^{\infty}$-bound on the curvatures is obtained, the rest of
the argument will be rather  standard. 
The following argument is modeled on ones by \cite[Th.~3.2]{CGMS} and \cite[pp.~61--67]{WJ}.

From Proposition \ref{lem:bfa}, the curvatures $F_{A_{i}}$ are uniformly 
bounded in $L^{p}$ for $p>1$. 
Thus, from \cite[Th.~1.5]{U} (see also \cite[Th.~A]{WK}), there exist gauge transformations $g_{i}
\in \mathcal{G}_{L^{p}_{2}}$ and a constant $C >0$ such that 
$$ ||g_{i} (A_{i} - A_{0}) ||_{L^{p}_{1}} \leq C . $$ 
We also have an $L^{p}_{2}$-bound on $g_{i} (\alpha_{i} , \beta_{i})$ as 
$(\alpha_{i} , \beta_{i})$ has $L^{\infty}$-bound. 
Hence we get a weak limit $(A, u, (\alpha , \beta ))$ in $L^{p}_{1}$ 
after taking a subsequence if necessary. 
From Proposition \ref{prop:psm},  there exists a gauge transformation $g \in
\mathcal{G}_{L^{p}_{2}}$ such that $g ( A, u, (\alpha ,\beta))$ is
smooth. 
Hence we suppose that the limit $(A,u , (\alpha , \beta))$ is smooth.

We then invoke the local slice theorem \cite[Th.~1.3]{U} (see also \cite[Th.~8.1]{WK}), namely, we
find $h_{i} \in \mathcal{G}_{L^{p}_{2}}$ with 
$d_{A}^{*} (h_{i} ( A_{i}) - A ) =0 $ for each $i$ and a constant $C > 0$
such that the following hold for each $i$. 
\begin{gather*}
|| h_{i} (A_{i}) - A ||_{L^{\infty}} \leq 
C || A_{i} - A ||_{L^{\infty}} , \quad 
|| h_{i} (A_{i}) - A ||_{L^{p}_{1}} \leq 
C || A_{i} - A ||_{L^{p}_{1}} . 
\end{gather*}
In addition, $\{ h_{i} \}$ is uniformly bounded in $L^{p}_{2}$. 
Then we replace $(A_{i} , u_{i} , (\alpha_{i}, \beta_{i}))$ by 
$h_{i} (A_{i} , u_{i} , (\alpha_{i}, \beta_{i}))$. 
For this sequence, we have the gauge condition $d_{A}^{*} (A_{i} - A)
=0$ with a uniform $L^{p}_{1}$-bound with $L^{p}_{2}$-gauges. 
Thus, the elliptic estimate yields the following. 
\begin{gather*}
||A_{i} - A ||_{L^{p}_{\ell}} \leq C_{\ell} , \quad
|| (u_{i} , (\alpha_{i} , \beta_{i} ) ) ||_{L^{p}_{\ell}} 
\leq C_{\ell}
\end{gather*}
for each $\ell \geq 1$ and all $i$, 
where $C_{\ell}$'s are positive constants. 
Hence the assertion holds. 
\qed

\subsection{Some computations}
\label{sec:kco}

In the previous subsection, we proved that the moduli spaces are compact
in the K\"{a}hler case. 
Hence the assumption (A2) in Section \ref{sec:inv} is satisfied in this case.   
Regarding the assumption (A1) in Section \ref{sec:inv} on reducibles, 
we have Proposition \ref{prop:red} for compact symplectic 6-manifolds.  
In addition, 
the following holds if the underlying manifolds are K\"{a}hler
threefolds.

\begin{proposition}
Let $X$ be a compact K\"{a}hler threefold, and 
 threefold, and 
let $\xi$ be the characteristic line bundle of a $Spin^{c}$-structure on
 $X$. 
If $\deg \xi > 0$ and $K_{X} <0$, then there are no reducible
 solutions to the equations. 
\end{proposition}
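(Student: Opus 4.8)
The plan is to argue by contradiction. Suppose a reducible solution exists, so that $\alpha \equiv \beta \equiv 0$, and I would derive a contradiction from the two hypotheses $\deg \xi > 0$ and $K_X < 0$ acting on the one surviving field $u$. The first step is to pin down what the equations impose on $u$ once $\alpha$ and $\beta$ vanish. Since $X$ is K\"ahler, Proposition~\ref{eqk} guarantees that any solution already satisfies the reduced system \eqref{keq1}--\eqref{keq2}. Setting $\alpha = \beta = 0$ in \eqref{keq2} leaves only the condition $\bar{\partial}^{*} u = 0$ together with the moment-map equation $i \Lambda F_{A'}^{1,1} = \frac{1}{8} |u|^2$. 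Because $u \in \Omega^{0,3}(X)$ on a complex threefold, one automatically has $\bar{\partial} u \in \Omega^{0,4}(X) = 0$, so $u$ is simultaneously $\bar{\partial}$-closed and $\bar{\partial}^{*}$-closed, hence harmonic. Thus $u$ determines a class in $H^{0,3}(X) \cong H^{3}(X, \mathcal{O}_{X})$.

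Next I would show that $u$ cannot vanish. Substituting $\alpha = \beta = 0$ into the degree identity \eqref{eq:degeq} gives
\[
\deg \xi = \frac{1}{16\pi} \int_{X} |u|^{2} \, vol ,
\]
so the hypothesis $\deg \xi > 0$ forces $u \not\equiv 0$, and therefore $u$ represents a nonzero class in $H^{0,3}(X)$. Finally I would invoke $K_{X} < 0$ to rule this out: by Serre duality (equivalently, by complex conjugation of harmonic forms) one has $H^{0,3}(X) \cong \overline{H^{0}(X, K_{X})}$, and a line bundle of negative degree admits no nonzero holomorphic sections, whence $H^{0}(X, K_{X}) = 0$. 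Consequently $H^{0,3}(X) = 0$, every harmonic $(0,3)$-form vanishes, and we get $u \equiv 0$, contradicting $u \not\equiv 0$. This contradiction establishes that no reducible solution exists.

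The routine content here is light. The one point that genuinely uses the geometry is the identification of $u$ as a harmonic $(0,3)$-form: this relies on both the vanishing of $\Omega^{0,4}(X)$ in complex dimension three and the equation $\bar{\partial}^{*} u = 0$ read off from \eqref{keq2}. The substantive input is the cohomological vanishing $H^{0,3}(X) = 0$ produced by $K_{X} < 0$; everything else is bookkeeping with the degree formula \eqref{eq:degeq}. Accordingly, the main thing to get right is the interplay between the two hypotheses, namely that $\deg \xi > 0$ is exactly what makes $u$ nonzero while $K_{X} < 0$ is exactly what forbids a nonzero harmonic $(0,3)$-form.
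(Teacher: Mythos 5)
Your proof is correct and follows essentially the same route as the paper: both arguments rest on the degree identity \eqref{eq:degeq} together with the fact that $K_X<0$ forces $u\equiv 0$, and both derive a contradiction with $\deg\xi>0$ once all fields are forced to vanish. The only difference is organizational --- the paper proves $u\equiv 0$ first and then shows $\beta\not\equiv 0$ directly, whereas you assume reducibility and argue by contradiction; in doing so you also spell out the justification (harmonicity of $u$ on a threefold plus $H^{0,3}(X)\cong\overline{H^{0}(X,K_X)}=0$) that the paper compresses into the unexplained assertion ``Since we assume that $K_{X}<0$, we have $u\equiv 0$.''
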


\begin{proof}
If $\deg \xi >0$, then $\alpha \equiv 0$ from Proposition
\ref{prop:dc}. 
Since we assume that $K_{X} <0$, we have $u \equiv 0$. 
Suppose now for a contradiction that $\beta \equiv 0$. 
Then we get $\deg \xi =0$ again from \eqref{eq:degeq}. 
This is a contradiction. Hence $\beta \not\equiv 0$. 
\end{proof}

Therefore, in these cases, one can define the integers $n_{X} (c)$ in
Section \ref{sec:inv} from
the moduli space $\mathcal{M}$. 
We compute some of them below. 
These are analogies of those for the Seiberg--Witten invariants,
presented as Proposition 7.3.1 in \cite{M}.

Firstly, we have the following.

\begin{proposition}
Let $X$ be a compact K\"{a}hler threefold with $K_{X} \leq 0$, and let $c$ be a
 $Spin^c$-structure on $X$ with $\deg \xi <0$, where $\xi$ is the
 characteristic line bundle of the $Spin^c$-structure.  
Then there are no solutions to the equation \eqref{keq1} and
 \eqref{keq2}. Namely, the moduli space is empty in this case. 
\label{prop:emp}
\end{proposition}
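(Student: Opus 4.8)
The plan is to reduce the statement to a vanishing theorem for holomorphic sections, using the moduli-space description already established. By Proposition \ref{prop:mneg}, every point of the moduli space in the case $\deg \xi < 0$ corresponds to a nonzero holomorphic section of $(K_X \otimes \xi)^{1/2}$ for some holomorphic structure on $\xi$. Since $\xi = K_X^{-1} \otimes L^2$, we have $K_X \otimes \xi = L^2$, so $(K_X \otimes \xi)^{1/2} = L$, and the moduli space is empty precisely when $H^0(X, L) = 0$ for every holomorphic structure on $L$. Thus the whole proposition follows once I show that $L$ has no nonzero holomorphic sections.

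The key step is a degree count. From $c_1(L^2) = c_1(K_X) + c_1(\xi)$, pairing with $[\omega^2]$ gives $2 \deg L = \deg K_X + \deg \xi$. The hypothesis $K_X < 0$ gives $\deg K_X < 0$ (the anti-canonical class pairs positively with $[\omega^2]$), and the hypothesis $\deg \xi < 0$ makes the second term negative as well, so $\deg L < 0$. Tracking the sign convention $K_X < 0$ correctly through the identity $L^2 = K_X \otimes \xi$ is the one point that genuinely requires care; everything else is routine.

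I would then invoke the standard fact that a holomorphic line bundle of negative degree over a compact K\"ahler manifold admits no nonzero holomorphic section: such a section would vanish on an effective divisor $D$ with $c_1(L) = [D]$, whence $\deg L = \int_D \omega^2 \geq 0$, contradicting $\deg L < 0$. Therefore $H^0(X, L) = 0$, and the moduli space is empty.

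Alternatively, one can argue directly from the equations without invoking Proposition \ref{prop:mneg}. Supposing a solution $(A, u, (\alpha, \beta))$ exists, Proposition \ref{prop:dc}(i) forces $\beta \equiv 0$ and $u \equiv 0$; then \eqref{eq:degeq} reads $\deg \xi = -\frac{1}{16\pi}\int_X |\alpha|^2\, vol$, so $\deg \xi < 0$ forces $\alpha \not\equiv 0$. But $\bar{\partial}_A \alpha = 0$ from \eqref{keq1} makes $\alpha$ a nonzero holomorphic section of $L$, again contradicting $\deg L < 0$. Either route is essentially bookkeeping, and there is no serious analytic obstacle here, since the hard analytic input—compactness, the vortex reduction, and the identification of the moduli space—is already in place.
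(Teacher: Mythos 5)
Your proposal is correct and, in substance, coincides with the paper's own proof: the paper argues exactly as in your ``alternative'' route, using Proposition \ref{prop:dc}(i) to kill $\beta$ and $u$, so that $\alpha$ becomes a nonzero holomorphic section of $L$, which contradicts $\deg L <0$ coming from $L^2 = K_X \otimes \xi$ with $K_X<0$ and $\deg\xi<0$. Your primary route through Proposition \ref{prop:mneg} is just the same degree contradiction repackaged via the moduli description, so there is no essential difference.
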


\begin{proof}
Suppose for a contradiction that there is a solution 
$(A, u, (\alpha , \beta))$ to the equation \eqref{keq1} and
 \eqref{keq2}. 
Since $\deg \xi < 0$, we get $\beta \equiv 0$ and $u \equiv 0$ from 
Proposition \ref{prop:dc}. 
As $\alpha$ is a holomorphic section of $L$, we get $\deg L > 0$. 
However, this contradicts the fact that 
$L^2 = K_{X} \otimes \xi$ has negative degree. 
Thus, the assertion holds. 
\end{proof}

Hence, we get the following.

\begin{Corollary}
Let $X$ be a compact K\"{a}hler threefold with $K_{X} \leq 0$, and let $c$ be a
 $Spin^c$-structure on $X$ with $\deg \xi <0$, where $\xi$ is the
 characteristic line bundle of the $Spin^c$-structure. 
Then $n_{X} (c) = 0$.  
\label{cor:kxn}
\end{Corollary}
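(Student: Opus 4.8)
The plan is to read this off immediately from Proposition \ref{prop:emp}. That proposition asserts that under exactly the hypotheses of the corollary---$X$ a compact K\"{a}hler threefold with $K_X < 0$ and $\deg \xi < 0$---the moduli space $\mathcal{M}$ of solutions to \eqref{keq1} and \eqref{keq2} is empty. All that remains is to verify that the invariant $n_X(c)$, defined in Section \ref{sec:inv} through Ruan's virtual neighbourhood construction, vanishes whenever $\mathcal{M} = \emptyset$.

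First I would recall that a virtual neighbourhood is a triple $(U, \R^k, S)$ with $S \colon U \to \R^k$ satisfying $S^{-1}(0) = \mathcal{M}$, and that the virtual neighbourhood invariant $\mu_F$ is computed from $S^{-1}(y)$ for a regular value $y$, the outcome being independent of $y$ by \cite[Prop.~2.6]{R}. The main step is then to take $y = 0$: since $\mathcal{M} = \emptyset$, the fibre $S^{-1}(0)$ is empty, so $0$ is vacuously a regular value and the computation at $y = 0$ agrees with the invariant. I would then dispatch the three cases in the definition of $n_X(c)$. For $\text{ind}(L) = 0$ the algebraic count of points in the empty set is $0$; for $-\text{ind}(L) > 0$ the class $[S^{-1}(0)] = [\emptyset]$ is zero in homology, so $\mu_F(c_1(\ell)^{-\text{ind}(L)/2}) = c_1(\ell)^{-\text{ind}(L)/2}([\emptyset]) = 0$; and for $-\text{ind}(L) < 0$ the invariant is $0$ by definition. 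In every case $n_X(c) = 0$, as required.

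I do not expect any substantial obstacle: the statement is a formal consequence of the emptiness of the moduli space together with the defining properties of the virtual neighbourhood invariant. The only point worth a sentence of care is the legitimacy of evaluating at $y = 0$, which is justified precisely because $S^{-1}(0) = \mathcal{M} = \emptyset$ makes $0$ a regular value vacuously; the independence result \cite[Prop.~2.6]{R} then guarantees agreement with the value at any other regular choice.
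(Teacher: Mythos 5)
Your proposal is correct and matches the paper's approach exactly: the paper also deduces the corollary immediately from Proposition \ref{prop:emp} (emptiness of the moduli space), treating the vanishing of $n_X(c)$ as a formal consequence. The case-by-case verification you supply---vacuous regularity of $y=0$, the empty count, and evaluation on $[\emptyset]$---is precisely the routine bookkeeping the paper leaves implicit.
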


For the case $K_{X}>0$, we have the following.

\begin{theorem}
Let $X$ be a compact K\"{a}hler threefold with $c_2 (X) =0$. 
Let $s_c$ be the $Spin^c$-structure coming from the complex structure. 
Assume that $K_{X} >0$. 
Then $n_{X}(s_{c}) =1$. 
\label{thm:kxp} 
\end{theorem}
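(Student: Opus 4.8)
The plan is to identify the moduli space, check that the virtual dimension vanishes so that $n_X(s_c)$ is an honest signed count, and then verify the unique solution contributes $+1$. First I would record the numerics: for the canonical $Spin^c$-structure $s_c$ one has $\xi = K_X^{-1}$, hence $L^2 = \xi \otimes K_X = \mathcal{O}_X$, so $L$ is topologically trivial with $c_1(L)=0$, and $\deg\xi = -\deg K_X < 0$ since $K_X > 0$. There are therefore no reducibles, so (A2) of Section \ref{sec:inv} holds, while (A1) holds by the compactness established above. Setting $c_1(L)=0$ in \eqref{index} leaves only $-\tfrac{1}{12}c_1(X)c_2(X)$; as Hirzebruch--Riemann--Roch gives $\int_X c_1(X)c_2(X)=24\,\chi(\mathcal{O}_X)$, the hypothesis $c_2(X)=0$ makes the virtual dimension vanish. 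Hence $\mathrm{ind}(L)=0$ and $n_X(s_c)=\mu_F$ is the algebraic count of the compact, zero-dimensional moduli space.

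Next I would pin down $\mathcal{M}$ through Proposition \ref{prop:mneg}. Here $(K_X \otimes \xi)^{1/2} = L$ is topologically trivial, so $H^0(X,L)\neq 0$ exactly when $L \cong \mathcal{O}_X$ holomorphically, in which case $\mathbb{P}H^0(X,\mathcal{O}_X)$ is a single point. Therefore $\mathcal{M}$ consists of one point, represented by the trivial holomorphic structure, a nowhere-vanishing constant section $\alpha_0$, and $\beta = u = 0$.

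The crux is to show this point contributes $+1$, which I would do in two stages. For unobstructedness, the decisive feature is that $\alpha_0$ is nowhere zero: the induced sheaf map $\mathcal{O}_X \xrightarrow{\alpha_0} L$ is an isomorphism. The linearisation of \eqref{meq1}--\eqref{meq2} at the solution is precisely the total complex obtained by coupling the Dolbeault complex \eqref{AHS1} of $\mathcal{O}_X$ to the $L$-twisted complex \eqref{AHS2} via multiplication by $\alpha_0$; this is the mapping cone of $\alpha_0$, and since $\alpha_0$ is a quasi-isomorphism the cone is acyclic. Hence the linearised operator is an isomorphism and the solution is a transverse, reduced zero, giving $n_X(s_c)=\pm 1$.

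It remains to fix the sign, which I expect to be the main obstacle. On the Kähler threefold the reduced equations are holomorphic in character, so the linearised operator at the canonical solution is complex-linear; since its kernel and cokernel both vanish, $\det(L)$ is canonically trivialised, and I would argue that this canonical complex trivialisation agrees with the orientation of $\det(L)$ fixed in Section \ref{sec:inv} through $H^0(i\mathbb{R})$, $H^1(i\mathbb{R})$, $H^{0,3}$ and $H^+(i\mathbb{R})$, because a complex-linear isomorphism carries positive determinant relative to complex orientations. Matching these two orientation conventions carefully is the delicate step; granting it, the sign is $+1$ and $n_X(s_c)=1$.
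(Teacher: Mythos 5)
Your reduction to the setup --- $\xi=K_X^{-1}$, $L$ topologically trivial, $\deg\xi<0$ forcing $\beta\equiv u\equiv 0$, vanishing of the index \eqref{index}, and the identification of $\mathcal{M}$ with the single point $(A_0,\alpha_0)$ --- is correct and agrees with the paper. The two steps that carry the actual weight of the theorem, transversality and the sign, both fail as written, and for the same reason: the linearisation of \eqref{meq1}--\eqref{meq2} is only \emph{real}-linear, so it is not ``precisely the mapping cone'' of $\alpha_0$, and it carries no canonical complex orientation. Concretely, the gauge group is $\mathrm{Map}(X,U(1))$ rather than its complexification, and the linearised equations contain the terms $\tfrac{1}{8}i\,\mathrm{Re}(a\bar{\alpha}_0)\,\omega$ and $\tfrac{1}{4}(\alpha_0\bar b-\bar{\alpha}_0 b)$, which are not complex-linear in $(a,b)$. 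That the cohomology of this real deformation complex agrees with the cohomology of the complex-linear cone of $\alpha_0\colon \Omega^{0,\bullet}(X)\to\Omega^{0,\bullet}(X,L)$ is true, but it is a statement that must be proved, and proving it is exactly the content of the paper's computation: integration by parts and the K\"{a}hler identities to force $b\equiv 0$, then the substitutions $ih=\bar{\chi}-\chi$ and $a=(p+iq)\alpha_0$ together with a gauge adjustment, leading to a scalar equation that forces $p=0$ and hence $H^1=0$ (with $H^2=0$ then following from the index). Your appeal to acyclicity of the cone takes this identification for granted; it is the right moral explanation and the conclusion is correct, but as an argument it assumes what needs to be shown.

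The sign determination is where the gap is fatal rather than cosmetic. Since kernel and cokernel vanish, the linearised operator trivialises its own determinant line; the issue is whether this trivialisation agrees with the orientation of $\det(L)$ fixed in Section \ref{sec:inv}, which lives at the reference split operator $\overline{L}=(\overline{L}_1,\overline{L}_2)$ and is induced from chosen orientations of $H^0(X,i\R)$, $H^1(X,i\R)$, $H^{0,3}(X)$ and $H^{+}(X,i\R)$. Comparing the two means deforming the actual linearisation to $\overline{L}$ through \emph{real} operators and tracking the determinant line. The paper does this: the zeroth-order part $M_2$ contributes complex-linear, orientation-preserving isomorphisms on the factors $H^1(X,i\R)\to H^{0,1}(X,\C)$, $H^{0,3}(X)\to H^{0,3}(X,\C)$ and $H^{0,2}(X,\C)\to H^{0,2}(X,\C)$, but the residual piece is the finite-dimensional complex $0\to i\R\to\C\to i\R\to 0$ (minus the inclusion, followed by $i/8$ times the real part), which is genuinely real-linear; its sign cannot be obtained from ``complex-linear isomorphisms have positive determinant'' and must be checked by hand (it is $+1$). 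That residual check is precisely the step you labelled delicate and then granted, so your argument establishes at best $n_X(s_c)=\pm 1$, not $n_X(s_c)=1$.
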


\begin{proof}

Since $s_c$ is the $Spin^c$-structure coming from the complex structure, 
the corresponding line bundle $\xi$ is $K_{X}^{-1}$. 
As we assume that $K_{X} >0$, thus $\deg \xi <0$; and $\beta \equiv 0$ and $u
 \equiv 0$. 
Because $L$ is trivial in this case, we only have a solution $(A_{0},
 \alpha_{0})$, 
where $A_{0}$ is the canonical connection of  $\xi$, 
and $\alpha_0$ is a non-zero, constant section of $L$. 
Thus the moduli space $\mathcal{M}$ contains only a single point.

We then prove that the moduli space $\mathcal{M}$ is smooth, and its
 dimension is zero.  
Firstly, the index \eqref{index} vanishes, since 
we assume that $c_2 (X) =0$, and $c_1 (L) = 0$ as $L$ is trivial. 
Thus, the dimension of the moduli space is zero, if it is smooth.

We next prove that the moduli space is actually smooth. 
The proof goes in a similar way of that presented in
 \cite[pp.~119--122]{M} for the Seiberg--Witten case except that we have
 the extra terms coming from $u$ in the equations.

Firstly, recall that the following elliptic complex of the
 Atiyah--Hitchin--Singer type in Section \ref{sec:lin}. 
\begin{equation*}
\begin{split}
i \Omega^{0} (\R) &\xrightarrow{L_{1}} 
  i \Omega^{1} \oplus \Omega^{0,3} (X) \oplus \left( \Omega^{0,0} (L) \oplus \Omega^{0,2} (L)
 \right)  \\
 & \qquad \qquad \xrightarrow{L_{2}} \left( i \Omega^{2} \cap 
   \left( \Omega^{0,2} \oplus \Omega^{2,0} \right) \oplus i  \Omega^{0}
 \omega \right) \oplus \left( \Omega^{0,1} (L) \oplus \Omega^{0,3} (L)
 \right) . \\ 
\end{split}
\end{equation*}
We denote by $H^0, H^1 , H^2$ the cohomology of the above complex. 
We now consider the above $L_1$ and $L_{2}$ at $(A_0, 0 , (\alpha_0 , 0))$. 
Then they become as
 follows. 
\begin{equation*}
\begin{split}
L_{1} (ig) &= ( 2 i d g , 0, -i \alpha_0 g , 0) \\ 
L_{2} ( h , \upsilon, (a, b)) 
&= ( P^{+} d (i h) - \frac{1}{8} i \text{Re} ( a \bar{\alpha_0} ) \omega  
  + \bar{\partial}^{*}  \upsilon + \frac{1}{4} ( \alpha_0 \bar{b} -
 \bar{\alpha_0} b) , \\
 &  \qquad \bar{\partial} a + \bar{\partial}^{*} b + \pi^{0,1} (i h)
 \alpha_0 / 2 , \bar{\partial} b  +
 \alpha_0 \upsilon /2  ) .\\  
\end{split}
\end{equation*}
Firstly, since $\alpha_0$ is a non-zero constant section, the kernel of $L_{1}$ is
 trivial. 
Thus, $H^0 =0$.

We next assume that $L_{2} ( h , \upsilon , (a,b)) =0$. 
Then, from the second component of $\bar{\partial} L_{2} ( h , \upsilon , (a,b)) =0 
$, we get 
$ \bar{\partial} \bar{\partial}^{*} b + 
 \frac{1}{2} \bar{\partial} ( \pi^{0,1} ( i h ) \alpha_0 ) = 0 $, 
where we used $\bar{\partial} \bar{\partial} = 0$ and $\bar{\partial}
 \alpha_0 =0$.

On the other hand, we have 
$ \bar{\partial} ( \pi^{0,1} ( i h)) = ( d (i h ))^{0,2} = -
 \bar{\partial}^{*} \upsilon + \frac{\bar{\alpha_0}b}{2}$. 
In addition, from the third component of $\bar{\partial}^{*} 
L_{2} (h , \upsilon , (a,b))$, we obtain 
$\bar{\partial}^{*} \bar{\partial} b + \frac{1}{2}
 \alpha_0 
 \bar{\partial}^{*} \upsilon =0$. 
Thus, we get 
$$ \bar{\partial} \bar{\partial}^{*} b + \bar{\partial}^{*}
 \bar{\partial} b + \frac{1}{4} | \alpha_0 |^2 b = 0 . $$  
By taking $L^2$-inner product of this with $b$, 
we get $|| \bar{\partial}^{*} b ||^{2}_{L^2} + || \bar{\partial} b
 ||^{2}_{L^2} + \frac{1}{4} ||
 \bar{\alpha_0} b ||^{2}_{L^2} = 0$. 
Thus, $\bar{\partial}^{*} b = \bar{\partial} b = 0, \bar{\alpha_0} b = 0$. 
As $\alpha_0 \neq 0$, we get $b
 \equiv 0$ by the unique continuation.

We next write $ih = \bar{\chi} - \chi$ by some $\bar{\chi} \in
 \Omega^{0,1} 
(X , \C)$. 
Then $L_{2} ( i h , \upsilon , (a,b)) = 0$ becomes 
\begin{gather}
P^{+} ( \partial \bar{\chi} - \bar{\partial} \chi ) - \frac{i}{2} 
 \text{Re} ( a \bar{\alpha_0}) \omega + \bar{\partial}^{*} \upsilon = 0, 
\label{eq:l22} \\
\bar{\partial} a + \frac{1}{2} \bar{\chi} \alpha_0 = 0, \quad \alpha_0
 \upsilon =0. 
\label{eq:l23} 
\end{gather}
We now write $a = ( p + iq) \alpha_0$, where $p$ and $q$ are real-valued
 functions. 
Then by adding $L_a (i q )$ to $(i h , \upsilon , (a, b))$, 
we can make $a = p \alpha_0$. 
By this, the first equation of \eqref{eq:l22} is now 
$$ \bar{\partial} ( p \alpha_0) + \frac{1}{2} 
 \bar{\chi} \alpha_0 = 0 .$$
From this, $\bar{\chi} = -2 \bar{\partial} ( p \alpha_0)$. 
Putting this into \eqref{eq:l23}, we get 
$4 \Delta p + p =0$. 
Thus, we get $p = 0$. 
Hence $i h = 0$ and $\upsilon =0$. 
Thus, the kernel of $L_{2}$ is in the image of $L_1$. 
Therefore $H^1 =0$.

As we mentioned in the second paragraph of this proof, the index of the elliptic complex is zero; and since
 $H^0 = H^1 =0$ as computed above, we conclude that $H^2 =0$. 
Thus, $n_{X} (s_c) = \pm 1$.

We then determine the sign of the number $n_{X} (s_c)$. 
As in \cite[p.121]{M}, we decompose the operator $L_1$ and $L_{2}$ as 
$L_{1} = \overline{L}_{1} +M_{1}$,  $L_{2} = \overline{L}_{2} +M_{2}$,
 where $\overline{L}_{1} = (2d ,0)$, $ \overline{L}_{2} = (P^{+} d + \bar{\partial}^{*} , 
 \bar{\partial} + \bar{\partial}^{*})$, 
and $M_1$ and $M_2$ are operators of order zero. 
We denote by $\mathcal{H}^{i}$ the space of harmonic forms defined by
 $\overline{L}_{i}$. 
Then the orientation of the determinant bundle of the elliptic complex
 is given by that of $\mathcal{H}^{i}$'s, 
which are induced by the given orientations on $H^{i} (X, i \R)$ and
 $H^{0,i} (X)$. 
Hence the sign of the determinant bundle at the solution is that of the
 following complex. 
\begin{equation*}
\begin{split}
0  \longrightarrow H^{0} (X , i \R ) 
   \longrightarrow &
\begin{matrix}
H^{1} (X , i \R ) \oplus H^{0,3} (X , \C) \\
 \oplus \\
H^{0,0} (X, \C) \oplus H^{0,2} (X , \C) \\
\end{matrix}  \\
& \qquad 
\qquad  \qquad  \longrightarrow 
\begin{matrix}
H^{0,2} (X , \C ) \oplus H^{0} (X , i \R) \omega  \\
 \oplus \\
H^{0,1} (X, \C) \oplus H^{0,3} (X , \C) \\
\end{matrix} 
\longrightarrow 0 . \\
\end{split}
\end{equation*}
In the above, the maps $H^{1} (X , i\R) \to H^{0,1} (X , \C)$, 
$H^{0,3} (X) \to H^{0,3} (X, \C)$ and $H^{0,2} (X , \C) \to H^{0,2} (X ,
 \C)$ are orientation preserving isomorphisms induced by $M_2$. 
Thus, by factoring through these isomorphisms, we get 
$$ 0 \longrightarrow i \R \longrightarrow \C \longrightarrow i \R  
\longrightarrow 0 ,$$
where the first map is minus the inclusion and the second map is $i /8$
 times the real part. 
Hence, with the given orientation, the sign of the determinant of this
 complex is $+ 1$. 
\end{proof}


\addcontentsline{toc}{chapter}{Bibliography}

\begin{flushleft}
Department of Mathematics, Osaka University  \\
1-1 Machikaneyama-cho, Toyonaka, Osaka, 560-0043, Japan \\
yu2tanaka@gmail.com
\end{flushleft}


\end{document}